\documentclass[11pt]{amsart}

\usepackage{amscd,amsmath,latexsym,amsthm,amsfonts,amssymb,graphicx,color,geometry,hyperref}

\usepackage[utf8]{inputenc}
\usepackage{amsmath}

\usepackage{enumitem}
\newtheorem{claim}{Claim}
\usepackage{placeins}
\usepackage[all]{xy}
\usepackage{biblatex}
\usepackage{nicefrac}
\usepackage{comment}
\usepackage[normalem]{ulem}
\usepackage{xcolor}
\usepackage{enumitem}
\usepackage{thmtools}
\usepackage{hyperref}
\usepackage{tikz}
\usetikzlibrary{arrows.meta, positioning}
\usepackage[noabbrev]{cleveref}			
\usepackage{pifont}
\usepackage{comment}
\usepackage{etoolbox}
\usepackage{datetime}
\usepackage[scr]{rsfso}
\usepackage{enumitem}

\newtheorem{definition}{Definition}[section]

\theoremstyle{plain}

\newtheorem{theorem}[definition]{Theorem}
\newtheorem{proposition}[definition]{Proposition}
\newtheorem{lemma}[definition]{Lemma}
\newtheorem{teo}{Theorem}

\newtheorem{corollary}[definition]{Corollary}

\theoremstyle{remark}
\newtheorem{remark}[definition]{Remark}

\theoremstyle{plain}

\newtheorem{ques}{Question}

\newcommand\restr[2]{{
		\left.\kern-\nulldelimiterspace 
		#1 
		\vphantom{\big|} 
		\right|_{#2} 
}}

\begin{document}
\title[]{Shadowing phenomenon for composition operators on the Hardy space $H^2(\mathbb{D})$}

\author{Artur Blois, Ben-Hur Eidt, Paulo Lupatini and Osmar R. Severiano} 

\address[A. Blois]{IMECC, Universidade Estadual de Campinas, Campinas, Brazil}
\email{blois@ime.unicamp.br}
\thanks{Artur Blois is supported by the Coordenação de Aperfeiçoamento de Pessoal de Nível Superior – Brasil (CAPES) – Finance Code 001.}

\address[B. H. Eidt]{ICMC, Universidade de São Paulo, São Carlos, Brazil} 
\thanks{Ben-Hur Eidt is supported by the São Paulo Research Foundation (FAPESP), Brasil. Process Number 2024/14727-4}
\email{benhur96dt@gmail.com}    

\address[P. Lupatini]{IMECC, Universidade Estadual de Campinas, Campinas, Brazil}
\email{lupatini@ime.unicamp.br}

\address[O. R. Severiano]{ IMECC, Universidade Estadual de Campinas, Campinas, Brazil}
\email{osmar.rrseveriano@gmail.com}
\thanks{Osmar R. Severiano would like to thank the Universidade Estadual de Campinas (Unicamp) for the institutional support and the opportunity to develop this research within the Postdoctoral Researcher Program (PPPD)}	
\subjclass[2020]{Primary: 47B33,  37B65, 30H10}
	
\keywords{Shadowing property, composition operators, Hardy space}
	
\begin{abstract}
Let $\phi$ be a holomorphic self-map of the open unit disk $\mathbb{D}.$ In this article, we study the shadowing phenomenon for composition operators $C_{\phi}f=f\circ \phi$ on the Hardy space $H^2(\mathbb{D}).$ We characterize all the composition operators induced by linear fractional self-maps of $\mathbb{D}$ that have the positive shadowing property. 
\end{abstract}
	
\maketitle

\section{Introduction}

The study of linear dynamics, especially in the last decades, has developed considerably and many connections with other areas of mathematics have been established, such as ergodic theory, number theory and geometry of Banach spaces \cite{bayart2009dynamics, grosse2011linear}. For instance, chaos, chain recurrence, structural stability, and the shadowing property have been studied in a series of papers \cite{ Bernardes_Messaoudi_shadowingprincipal, Bernardes_Alfred_Shadowing2024,  Bernardes-Cirilo, Favaro,, Nilson_Bonilla_Muller_Peris_LiYorkechaos}.

A fundamental notion in dynamical systems is the \textit{shadowing property}, which comes from the work of Sina\u{\i} \cite{Sinai} and Bowen \cite{Bowen}. The shadowing property means that near a pseudo-orbit there exists a real orbit. In the context of linear dynamics, it is well known that every invertible \textit{hyperbolic} bounded linear operator has the shadowing property \cite{Morimoto, Ombach}. Another important idea in dynamical systems is the concept of \textit{expansivity}, which was introduced by Utz \cite{Utz} and studied on Banach spaces in \cite{Bernardes_Messaoudi_shadowingprincipal, Bernardes-Cirilo}. Several authors have become interested in these topics, either by looking for examples of maps that are hyperbolic, expansive or possess the shadowing property or by establishing general criteria that allow one to determine when a map has such properties. For instance, complete characterizations of expansive and uniformly expansive weighted shifts on classical Banach sequence spaces were obtained in \cite{Bernardes-Cirilo}, while the characterization of those that satisfy the shadowing property was obtained in \cite{Bernardes_Messaoudi_shadowingprincipal}. Recently, Bernardes and Messaoudi  established the following relation between these three concepts. \textit{If $T$ is an invertible bounded linear operator on a Banach space, then $T$ is hyperbolic if and only if $T$ is expansive and has the shadowing property} \cite[Theorem 1]{Bernardes_Messaoudi_shadowingprincipal}. In the same work, and allowing non-invertible operators, they established the following. \textit{Let $T$ be a bounded linear operator on a Banach space, then $T$ is hyperbolic with spectrum contained in $\mathbb{C}\backslash \overline{\mathbb{D}} $ if and only if $T$ is positively expansive and has the positive shadowing property} \cite[Proposition 5]{Bernardes_Messaoudi_shadowingprincipal}.

Inspired by these results, it is natural to ask what the situation is for more concrete operators; this is where composition operators arise. Let $X$ be a space of complex-valued functions on the set $\Omega$ and $\phi:\Omega\to \Omega$ be a function. The \textit{composition operator} $C_{\phi}$ with \textit{symbol} $\phi$ is defined by $C_{\phi}f=f\circ \phi$ for any $f\in X.$ Operators of this type have been studied on a variety of spaces \cite{Cowen, Duren}, and as is well known, there are several non-trivial questions in the study of composition operators, such as boundedness, cyclicity, compactness, hypercyclicity, spectra and spectral radii and more recently, properties such as expansivity and shadowing property. For example, if $X$ is the $L^p$ space, then recent contributions on linear dynamics  can be found in \cite{Bernardes-Pires, Martina_Darji_Daniello_shadowingandghinL^p,Darji_Pires_Benito_compo}.

Phenomena concerning the orbit behavior of bounded linear fractional composition operators (that is, whose inducing symbols are linear fractional transformations) have been studied on several holomorphic function spaces, with special interest in the Hardy spaces of the open unit disk $H^2(\mathbb{D})$ and of the open right half-plane $H^2(\mathbb{C}_+),$ and in the weighted Dirichlet space $\mathcal{S}_{\nu}$. In this context, Bourdon and Shapiro \cite{Shapiro_Bourdon_Livro_Ciclicidade} completely characterized which linear fractional composition operators are cyclic and which are hypercyclic on $H^2(\mathbb{D})$; Gallardo-Gutiérrez and Montes-Rodríguez \cite{Eva-Afonso} did the same on each $\mathcal{S}_{\nu}$; and Noor and Severiano \cite{Noor-Severiano} obtained the corresponding characterization on $H^2(\mathbb{C}_+)$.

Recently, the (positive) shadowing property for linear fractional composition operators on $H^2(\mathbb{C}_+)$ was studied in \cite{Carlos_Javier_ShadowingHalfPlane}. One of the advantages of the half-plane setting is that some composition operators are hyperbolic (in the linear dynamics sense), expansive or normal. These three concepts provide nice ways to approach the shadowing property using general results. The situation on $H^2(\mathbb{D})$ is more complicated, since every composition operator fixes the reproducing kernel function $k_0$. Consequently, such operators are neither hyperbolic nor expansive, which suggests that a different approach is necessary.

 The goal of this paper is to add the positive shadowing property to the list of properties which are completely understood for linear fractional composition operators on $H^2(\mathbb{D}).$ This work can be seen as a contribution to the composition operator program (see the introduction of \cite{Shapiro_Bourdon_Livro_Ciclicidade}).

The paper is organized as follows. In Section \ref{section 2}, we  fix the notation and  recall a few results which will be important in our work. We finish this section by presenting our main result, Theorem \ref{Theorem}, which gives the complete characterization of all linear fractional composition operators on $H^2(\mathbb{D})$ that have the positive shadowing property. In Section \ref{section 3}, we provide the proof of our main result. We prove it by considering four cases separately; the ideas underlying these cases are independent of one another. In particular, we show that $C_{\phi}$ is generalized hyperbolic when $\phi$ is a hyperbolic non-automorphism of type I. This furnishes an example of a composition operator on $H^2(\mathbb D)$ with the shadowing property which is not hyperbolic.  Finally, in Section \ref{section 4}, we briefly comment on which ideas can be adapted to $H^p(\mathbb{D})$ for $1 \leq p \leq \infty$. Our main results are summarized in the following table. 
\FloatBarrier
\begin{table}[htb!]
\caption{Positive shadowing property for linear fractional composition operators} 
\centering
\begin{tabular}{|c|c|c|c|}
\hline
\textbf{Type of $\phi$}  & \textbf{Fixed points} & \textbf{ Canonical form of $\phi$} & \textbf{Pos. shad. }\\
      \hline
      EA   & $0,\infty$
      & $\displaystyle \phi(z)=\omega z,\quad \omega\in\mathbb{T} \setminus \{1\}$
      & \ding{55}  (Cor. \ref{elliptic-loxodormic-hyperbolic non-automorphism of type II}) \\
      \hline 

HA& $1,-1$
      & $\displaystyle \phi(z)=\frac{r+z}{1+rz},\quad 0<r<1$
      & \ding{52} (Prop. \ref{hyperbolic automorphism}) \\[2mm]
\hline

HNA I & $1,\infty$
      & $\displaystyle \phi(z)=rz+1-r,\quad 0<r<1$
      & \ding{52} (Cor. \ref{hyperbolic non-automorphism of type I}) \\[2mm]
\hline
HNA II & $0,1$
      & $\displaystyle \phi(z)=\frac{rz}{1-(1-r)z},\quad 0<r<1$ & \ding{55}  (Cor. \ref{elliptic-loxodormic-hyperbolic non-automorphism of type II}) \\[2mm]
\hline
LOX  & $c\in\mathbb{D},\infty$
      & $\displaystyle \phi(z)=a(z-c)+c,\quad |a|+|1-a||c|\le 1$
      & \ding{55} (Cor. \ref{elliptic-loxodormic-hyperbolic non-automorphism of type II})\\
\hline
PA & $1$
      & $\displaystyle \phi(z)=\frac{(2-a)z+a}{-az+2+a},\quad \mathrm{Re}(a)=0, \,\ a \neq 0$
      & \ding{55} (Th. \ref{parabolic}) \\[2mm]
\hline
PNA & $1$
      & $\displaystyle \phi(z)=\frac{(2-a)z+a}{-az+2+a},\quad \mathrm{Re}(a) > 0$
      & \ding{55} (Th. \ref{parabolic}) \\[2mm]
\hline
\end{tabular}
\end{table}
\FloatBarrier


\section{Notation and background}\label{section 2}
Throughout the paper, $\mathbb{N}$ denotes the set of all positive integers, $\mathbb{C}$ denotes the set of the complex numbers, $\mathbb{T}$ denotes the unit circle $\{z\in \mathbb{C}:|z|=1\}$ in the complex plane, $\mathbb{N}_0= \mathbb{N}\cup \{0\}$ and $\widehat{\mathbb{C}}=\mathbb{C}\cup\{\infty\}$. Let $\phi:\Omega\to \Omega$ be a function and $n\in \mathbb{N}$, the $n$-th iterate of $\phi$ is $\phi^{[n]}=\phi\circ \phi\circ\cdots \circ \phi$ ($n$ times). For convenience, we write $\phi^{[0]}$ for the identity function.  If $\Omega$ is an open subset of $\mathbb{C},$ we denote the set of all complex-valued holomorphic functions on $\Omega$ by $\mathrm{Hol}(\Omega).$

Let $X$ be a complex Banach space and $T:X\to X$ be a bounded linear operator. We will call $T$ an operator (instead of a bounded linear operator) and denote by $\mathcal{B}(X)$ the set of all  operators on $X.$ For the next definitions we assume $T\in \mathcal{B}(X)$ and   write $T^n$ instead of $T^{[n]}.$ The norm of $T$ is $\|T\|=\sup\{\|Tx\|:x\in X \ \text{and} \ \|x\|\leq 1\}.$ The \textit{spectrum} and the \textit{right spectrum} of $T$ are given by $\sigma(T)=\{  \lambda \in \mathbb{C}: T-\lambda I \ \text{is not invertible} \}$ and $\sigma_r(T)=\{\lambda \in \mathbb{C}: T-\lambda I \ \text{is not right invertible} \},$ respectively. The \textit{spectral radius} of $T,$ denoted by $r(T),$ is $r(T)=\sup\{|\lambda|: \lambda\in \sigma(T)\}$ and it can be determined by the spectral radius formula $r(T)=\displaystyle \lim_{n\to \infty}  \|T^n\|^{1/n}.$ We say that   $T_1\in \mathcal{B}(X_1)$ and $T_2\in \mathcal{B}(X_2)$ are similar if there exists an invertible operator $U:X_1\rightarrow X_2$ such that $T_1=U^{-1}T_2U.$

\subsection{Shadowing in linear dynamics} Our goal in this subsection is to  establish the definitions of positive shadowing property and generalized hyperbolicity. We also present some known facts about these topics.

\begin{definition} \normalfont Let $\delta>0.$ A $\delta$-\textit{pseudo-orbit} of $T\in \mathcal{B}(X)$ is a sequence $(x_n)_{n \geq 0}$ in $X$ such that
\begin{align*}
\|Tx_n-x_{n+1}\|\leq \delta, \quad \ \text{for all} \ n\in \mathbb{N}_0.
\end{align*}
\end{definition}
For $T\in \mathcal{B}(X)$, we can construct a natural $\delta$-pseudo-orbit from a non-zero vector $x\in X,$ as follows. Define $x_0 = 0$, $h = (1/\|x\|)x$  and 
\begin{align*}
x_n= \delta  \sum_{j=0}^{n-1}T^j h  \,\ \,\  \quad n\geq 1.
\end{align*}
Then, for each $n\in\mathbb{N}_0$ we have $Tx_n=x_{n+1}- \delta h$ and hence $\|Tx_n-x_{n+1}\|=\|\delta h\|=\delta.$ This construction will be used frequently in the next section.

\begin{definition} \normalfont We say that $T\in \mathcal{B}(X)$  has the \textit{positive shadowing} property  if for every $\varepsilon>0$ there exists $\delta>0$ such that every $\delta$-pseudo-orbit $(x_n)_{n \geq 0}$ of $T$ is $\varepsilon$-shadowed  by a real orbit of $T,$ that is, there exists $ x \in X$ such that 
\begin{align*}
\|T^nx-x_n\|\leq \varepsilon, \quad \ \text{for all} \ n\in \mathbb{N}_0.
\end{align*}
\end{definition}

As usual, for an invertible operator, we define the notion of \textit{shadowing} simply
by replacing the set $\mathbb{N}_0$ by the set $\mathbb{Z}$ in the definition  of positive shadowing. In \cite[Theorem 1]{Bernardes_Alfred_Shadowing2024}, Bernardes and Peris showed that these notions coincide for invertible operators on Banach spaces.

The following result is folklore; we include a proof for the sake of completeness.

\begin{proposition}\label{similar} If $T_1\in \mathcal{B}(X_1)$ and $T_2\in \mathcal{B}(X_2)$ are similar, then $T_1$ has the positive shadowing property if and only if $T_2$ does.
\end{proposition}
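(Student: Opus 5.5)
By symmetry of similarity (if $T_1=U^{-1}T_2U$ then $T_2=UT_1U^{-1}$ with $U^{-1}$ invertible), it suffices to prove one implication. So I will assume that $T_2$ has the positive shadowing property and deduce it for $T_1$. The whole argument consists of transporting pseudo-orbits and orbits through $U$ and $U^{-1}$, and then tracking how the constants $\delta$ and $\varepsilon$ change under these bounded maps.

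Fix $\varepsilon>0$ and set $\varepsilon'=\varepsilon/\|U^{-1}\|$. Positive shadowing for $T_2$ gives some $\delta'>0$ attached to $\varepsilon'$; I will take $\delta=\delta'/\|U\|$. Now let $(x_n)_{n\ge0}$ be a $\delta$-pseudo-orbit of $T_1$ and put $y_n=Ux_n\in X_2$. Since $UT_1=T_2U$, we have
\begin{align*}
\|T_2y_n-y_{n+1}\|=\|U(T_1x_n-x_{n+1})\|\le \|U\|\delta=\delta',
\end{align*}
so $(y_n)$ is a $\delta'$-pseudo-orbit of $T_2$. Hence there is $y\in X_2$ with $\|T_2^ny-y_n\|\le\varepsilon'$ for all $n\in\mathbb{N}_0$.

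Set $x=U^{-1}y$. Iterating the relation $T_1=U^{-1}T_2U$ gives $T_1^n=U^{-1}T_2^nU$, and therefore $T_1^nx=U^{-1}T_2^ny$. It follows that
\begin{align*}
\|T_1^nx-x_n\|=\|U^{-1}(T_2^ny-y_n)\|\le\|U^{-1}\|\varepsilon'=\varepsilon,
\end{align*}
so $x$ $\varepsilon$-shadows $(x_n)$, which is what we wanted.

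There is no real obstacle here. The only points needing care are the choice of constants and the identity $T_1^n=U^{-1}T_2^nU$, which follows by induction. Note also that $\|U\|,\|U^{-1}\|>0$, since the trivial case of zero spaces can be ignored, so dividing by these norms is legitimate.
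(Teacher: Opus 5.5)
Your proof is correct and is essentially the paper's argument: you transport the pseudo-orbit through one of $U$, $U^{-1}$ and the shadowing orbit back through the other, rescaling $\delta$ and $\varepsilon$ by the operator norms. The only difference is cosmetic: the paper proves the direction from $T_1$ to $T_2$ (using $\varepsilon/\|U\|$ and $\delta/\|U^{-1}\|$) and says the converse is similar, whereas you prove the converse direction and justify the other one explicitly via the symmetry of similarity.
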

\begin{proof} Let $T_1=U^{-1}T_2U$ for some invertible operator $U:X_1\to X_2.$ Suppose that $T_1$ has the positive shadowing property. Given $\varepsilon>0$, there exists $\delta>0$ such that every $\delta$-pseudo-orbit  of $T_1$ is $\varepsilon/\|U\|$-shadowed by a real orbit of $T_1.$ Let $(y_n)_{n \geq 0}$ be an arbitrary $\delta/\|U^{-1}\|$-pseudo-orbit of $T_2$ and define $x_n=U^{-1}y_n$ for $n\in \mathbb{N}_0.$ Hence
\begin{align*}
\|T_1x_n-x_{n+1}\|&=\|T_1U^{-1}y_n-U^{-1}y_{n+1}\|=\|U^{-1}T_2y_n-U^{-1}y_{n+1}\|\\
&\leq \|U^{-1}\|\|T_2y_n-y_{n+1}\|\leq \delta,
\end{align*}
for all $n\in \mathbb{N}_0.$ Therefore, $(x_n)_{n \geq 0}$ is a $\delta$-pseudo-orbit  of $T_1.$ By the choice of $\delta,$ there exists $x \in X_1$ such that $\|T_1^nx-x_n\|\leq \varepsilon/\|U\|$ for all $n\in \mathbb{N}_0.$ Let $y=Ux,$ then
\begin{align*}
\|T_2^ny-y_n\|=\|T_2^nUx-y_n\|=\|UT_1^nx-Ux_n\|\leq \|U\|\|T_1^nx-x_n\|\leq \varepsilon
\end{align*}
for all $n\in \mathbb{N}_0.$ The proof of the other implication  is similar.
\end{proof}

\begin{definition} \normalfont We say that $T\in \mathcal{B}(X)$ is \textit{generalized hyperbolic} if there exists  a direct sum decomposition $X=M \oplus N $ where $M$ and $N$ are closed subspaces of $X$ with the following properties
\begin{enumerate}
    \item $T(M) \subset M$ and $r(T_{|M}) < 1;$
    \item $T|_{N}: N \to T(N)$ is bijective, $T(N)$ is closed, $N \subset T(N)$ and $r( (T{_{|N}})^{-1})_{|N}) < 1. $
\end{enumerate}
\end{definition}

Rewriting \cite[Proposition 19]{Bernardes_Messaoudi_shadowingprincipal} in terms of the last definition, we have that each generalized hyperbolic operator has the positive shadowing property. We refer to \cite{Bernardes_Messaoudi_shadowingprincipal, Bernardes_Alfred_Shadowing2024, Bernardes-Cirilo} and \cite{Favaro,Cirilo-Gollobit} for more details on shadowing in the context of linear dynamics and on generalized hyperbolic operators, respectively.

\subsection{Linear fractional composition operators and canonical forms}

The Hardy space of the open unit disk, denoted by $H^2(\mathbb{D}),$ consists of all functions $f\in \mathrm{Hol}(\mathbb{D})$ such that
\begin{align*}
\|f\|_2:= \left(\sup_{0 < r < 1} \frac{1}{2\pi} \int_{0}^{2\pi} |f(re^{i \theta} )|^2 d\theta\right)^{1/2}<\infty. 
\end{align*}
It is well known that $H^2(\mathbb{D})$ is a Hilbert space when endowed with the inner product 
\begin{align*}
\langle f, g\rangle:=\sum_{n=0}^\infty\widehat{f}(n)\overline{\widehat{g}(n)}
\end{align*}
where $(\widehat{f}(n))_{n \geq 0}$ and $(\widehat{g}(n))_{n \geq 0}$ are the Maclaurin coefficients for $f$ and $g,$ respectively. Moreover, $H^2(\mathbb{D})$ is a reproducing kernel Hilbert space of holomorphic functions. Indeed, if $w\in \mathbb{D}$ and $f\in H^2(\mathbb{D}),$ then $f(w)=\langle f, k_w\rangle$, where the \textit{reproducing kernel function} $k_w$ is given by 
\begin{align*}
k_w(z)=\frac{1}{1-\overline{w}z}, \quad z\in \mathbb{D}.
\end{align*}
A simple computation gives $\|k_w\|_2=(1-|w|^2)^{-1/2}$ for each $w\in \mathbb{D}.$ The Cauchy-Schwarz inequality gives the pointwise estimate
$\|f\|_2\|k_w\|_2\geq |f(w)|,$  for each $f\in H^2(\mathbb{D}).$ 

Due to the Littlewood Subordination Theorem, if $\phi$ is a holomorphic self-map of $\mathbb{D},$ then $C_{\phi}$ is an operator on $H^2(\mathbb{D}).$ A natural aspect of studying the properties of the composition operator $C_{\phi}$ is the analysis of the behavior of its orbits $\mathrm{Orb}(C_{\phi},f)=\{C_{\phi}^nf:n\in \mathbb{N}_0\}$. This occurs because the $n$-th iterate of $C_{\phi}$ is $C_{\phi}^n=C_{\phi^{[n]}}.$ On the other hand, since $k_0$ is a fixed point for any composition operator on $H^2(\mathbb{D}),$ it follows that $H^2(\mathbb{D})$ cannot support composition operators that are hyperbolic or positively expansive. Therefore, results such as \cite[Theorem 1 and Proposition 5]{Bernardes_Messaoudi_shadowingprincipal}, which relate hyperbolicity, (positive) expansivity and (positive) shadowing, cannot be used directly to characterize composition operators that have the shadowing property on $H^2(\mathbb{D})$.

 Recall that a \textit{linear fractional transformation} (abbreviated by LFT) is a meromorphic bijection $\phi:\widehat{\mathbb{C}}\to \widehat{\mathbb{C}}$ of the form
\begin{align}\label{lft}
\phi(z) = \frac{az + b}{cz + d}, 
\end{align}
whose coefficients $a, b, c, d \in \mathbb{C}$ satisfy $ad - bc \neq 0.$ 
We are interested here in  $\mathrm{LFT}(\mathbb{D}),$ the subfamily of LFT consisting of self-maps of $\mathbb{D}$ as in \eqref{lft}. If $\phi\in \mathrm{LFT}(\mathbb{D}),$ we say that $C_{\phi}$ is a \textit{linear fractional composition operator}. Every $\phi\in \mathrm{LFT}(\mathbb{D})$ which is not the identity function has either one or two fixed points, and according to the location of the fixed points of $\phi,$ we classify it as follows \cite{Shapiro_Bourdon_Livro_Ciclicidade, Gallardo-Gutierrez,Shapiro_livro_compopandfunctiontheory}:
\begin{itemize}
\item \textit{Parabolic} if $\phi$ has just one fixed point, which must lie in $\mathbb{T}.$ In this case, we distinguish between two different situations for $\phi$,    parabolic automorphism (\textbf{PA}) and parabolic non-automorphism (\textbf{PNA}). 
\item \textit{Hyperbolic} if $\phi$ has two fixed points, one of which must be in $\mathbb{T}.$ In this case, we distinguish between three different situations: \textit{hyperbolic automorphism} (\textbf{HA}) if the other fixed point of $\phi$ lies in $\mathbb{T},$ \textit{hyperbolic non-automorphism of type} I (\textbf{HNA I}) if the other fixed point of $\phi$ lies  in $\widehat{\mathbb{C}}\backslash \overline{\mathbb{D}}$ and  \textit{hyperbolic non-automorphism of type} II (\textbf{HNA II}) if the other fixed point of $\phi$ lies in $\mathbb{D}.$

\item \textit{Elliptic} or \textit{Loxodromic} if $\phi$ has a fixed point in $\mathbb{D}$ and another fixed point outside $\overline{\mathbb{D}}.$ We distinguish between two different situations: if $\phi$ is an automorphism,
then $\phi$ is said to be \textit{elliptic} ($\textbf{EA}$). Otherwise, $\phi$ is said to be \textit{loxodromic} ($\textbf{LOX}$). 
\end{itemize}
It is well-known that each linear fractional composition operator on  $H^2(\mathbb{D})$ is similar to a composition operator in its \textit{canonical form}  \cite{Shapiro_Bourdon_Livro_Ciclicidade}. For instance, Bourdon and Shapiro \cite[page 28]{Shapiro_Bourdon_Livro_Ciclicidade} showed that if $\phi$ is a parabolic self-map of $\mathbb{D}$, then $C_{\phi}$ is similar to some $C_{\varphi},$ where
\begin{align*}
\varphi(z)=\frac{(2-a)z+a}{-az+(2+a)}, \quad z\in \mathbb{D},
\end{align*}
with $\mathrm{Re}(a)\geq0.$ In the case that $\phi$ is a hyperbolic self-map of $\mathbb{D}$ of type I, they showed that $C_{\phi}$ is similar to some $C_{\psi},$ where 
\begin{align*}
\psi(z)=az+1-a, \quad z\in \mathbb{D},
\end{align*}
with $a\in (0,1).$

Therefore by Proposition \ref{similar}, it is sufficient to study the canonical forms of the linear fractional self-maps of $\mathbb{D}$ to completely characterize the linear fractional composition operators on $H^2(\mathbb{D})$ that have the  positive shadowing property.

\begin{remark}
   Note that the term hyperbolic is used in distinct contexts here. There is a notion of a (generalized) hyperbolic operator and the notion of a hyperbolic  self-map of $\mathbb{D}$ introduced above. We caution the reader not to confuse these definitions. 
\end{remark}

We finish this subsection by stating the main result of this paper.

 \begin{theorem}\label{Theorem} Let $C_{\phi}$ be a linear fractional composition operator on $H^2(\mathbb{D}).$ Then $C_{\phi}$ has the positive shadowing property if and only if $\phi$ is a hyperbolic automorphism or a hyperbolic non-automorphism of type I.
 \end{theorem}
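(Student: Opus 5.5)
By Proposition \ref{similar}, it suffices to treat the canonical forms in the table, one case at a time. I expect the negative cases (EA, LOX, HNA II) to share one obstruction and the parabolic cases another. The two positive cases need separate constructions.

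\textbf{Hyperbolic automorphism.} For $\phi(z)=\frac{r+z}{1+rz}$, the operator $C_\phi$ is invertible. The spectrum is known to be the annulus $\{\lambda: \rho^{-1/2}\le|\lambda|\le\rho^{1/2}\}$, where $\rho=\frac{1+r}{1-r}$, so $C_\phi$ is not hyperbolic. I would therefore split $H^2$ using the two boundary fixed points. Take $M$ to be the closed span of functions that vanish to high order at the attracting fixed point $1$, where iteration contracts, and $N$ the corresponding space at the repelling point $-1$, where the inverse contracts. Concretely, let $M=(1+z)H^2$-type and $N=(1-z)H^2$-type subspaces, weighted appropriately so that the restricted spectral radii drop below $1$. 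The aim is $H^2=M\oplus N$ with $C_\phi M\subset M$, $r(C_\phi|_M)<1$, and $r(C_\phi^{-1}|_N)<1$. Then $C_\phi$ is generalized hyperbolic, and positive shadowing follows from \cite[Proposition 19]{Bernardes_Messaoudi_shadowingprincipal}.

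If that splitting fails, the fallback is the half-plane model. Under the Cayley transform, $C_\phi$ is similar to a weighted dilation $f(w)\mapsto f(\rho w)$ on $H^2(\mathbb{C}_+)$ twisted by a multiplication operator. Such an operator is known to be generalized hyperbolic via the Mellin/Fourier decomposition into two half-lines, as in \cite{Carlos_Javier_ShadowingHalfPlane}.

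\textbf{HNA I.} For $\phi(z)=rz+1-r$, I would use $M=\{f\in H^2: f(1)\text{-type vanishing}\}$, or more concretely the decomposition coming from the eigenfunctions $(1-z)^\alpha$. These satisfy $C_\phi(1-z)^\alpha=r^\alpha(1-z)^\alpha$ and lie in $H^2$ for $\mathrm{Re}\,\alpha>-1/2$. Eigenvalues with $\mathrm{Re}\,\alpha>0$ have modulus below $1$, while those with $-1/2<\mathrm{Re}\,\alpha<0$ have modulus above $1$. The point spectrum fills the annulus $r^{1/2}<|\lambda|$, up to the boundary, so the plan is:
\begin{itemize}
\item take $M=(1-z)H^2$, closed with closed range, and check that $C_\phi|_M$ has spectral radius less than $1$;
\item take $N$ to be a finite-dimensional complement such as the constants, on which $C_\phi$ is the identity.
\end{itemize}
The constants give an eigenvalue of exactly $1$, which breaks condition (2). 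I would therefore instead take $N$ spanned by $(1-z)^{\alpha}$ for some $\alpha$ with $-1/2<\alpha<0$, replacing $1$, and set $M=(1-z)^{\beta}H^2$ with $\beta>0$ small. The main technical step is proving that $H^2=M\oplus N$ with closed $M$ and the spectral radius bounds. This uses the norm estimates $\|C_{\phi^{[n]}}|_{(1-z)^\beta H^2}\|\lesssim r^{n(\beta+1/2)-\epsilon}$ computed from the explicit symbol $\phi^{[n]}(z)=r^nz+1-r^n$.

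\textbf{Negative cases.} For EA, LOX and HNA II there is a fixed point $c\in\mathbb{D}$ (with $c=0$ in the canonical forms), and $C_\phi^*k_0=k_0$ gives a nontrivial conserved functional $f\mapsto f(0)$. I would use the natural pseudo-orbit with $h=k_0$: $x_n=n\delta k_0$, so $x_n(0)=n\delta$. For any real orbit, $(C_\phi^n x)(0)=x(0)$ is constant. Hence $|C_\phi^nx(0)-x_n(0)|\ge n\delta-|x(0)|\to\infty$, and the pointwise bound $|f(0)|\le\|f\|_2$ shows that no $\varepsilon$-shadowing orbit exists. This settles these three cases at once.

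\textbf{Parabolic case.} Here there is no interior fixed point, but $1$ is an eigenvalue in a weaker sense. I would find a vector $h$ whose partial sums $\sum_{j<n}C_\phi^jh$ have norms tending to infinity, while every genuine orbit differs from them by an amount forced to grow. The idea is that the spectrum of $C_\phi$ in the parabolic case is the unit circle or a spiral through $1$, so $C_\phi-I$ is not bounded below yet has dense range. A standard argument shows that positive shadowing combined with $1\in\sigma(T)$ forces $T-I$ to be surjective on a suitable subspace. I expect this to be the main obstacle. I would try to show directly, via the Bernardes–Messaoudi characterization that positive shadowing implies $\mathrm{Ran}(T-\lambda)$ is closed for $|\lambda|=1$, that $C_\phi-I$ has non-closed range for parabolic $\phi$. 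To do so, I would exhibit functions $(1-z)^{\alpha}$-like approximate eigenvectors built from the Cowen–Kriete eigenfunctions $e^{t\frac{1+z}{1-z}}$.
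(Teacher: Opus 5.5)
\textbf{Verdict.} Your EA, LOX and HNA II argument is correct. It is the paper's Proposition~\ref{loxodromic-elliptic} specialised to $\alpha=0$ and $f=1$. Conjugating by a disc automorphism, which is allowed by Proposition~\ref{similar}, moves the interior fixed point to $0$; this matters because the LOX canonical form has a general $c\in\mathbb{D}$. The ``if'' direction, however, is not established, and the parabolic case has a gap.

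\textbf{HNA I.} Your spaces $(1-z)H^2$ and $(1-z)^\beta H^2$ are not closed. Since $(1-z)^\beta$ is outer, $(1-z)^\beta H^2$ is dense. It is also proper, because $(1-z)^{-\beta}\notin H^\infty$. So it cannot be a direct summand. Your decay estimate for $C_\phi^n$ restricted to it in the $H^2$ norm is false: the restriction to a dense subspace has the same norm as $C_\phi^n$, which is at least $1$.

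More fundamentally, no splitting with $\dim N<\infty$ can work. The conditions $N\subset T(N)$ and $\dim T(N)=\dim N$ force $T(N)=N$. Then both summands are invariant, and $\sigma(C_\phi)=\sigma(C_\phi|_M)\cup\sigma(C_\phi|_N)$ would miss $\mathbb{T}$. This contradicts $C_\phi 1=1$. The summand $N$ must be infinite-dimensional and not invariant. The paper gets this by passing through $\mathcal U$ and the Paley--Wiener transform to $W_aF(t)=e^{-t(1-a)}F(at)$ on $L^2(\mathbb{R}_+)$. It then splits by support at $t=a$, where $W_a(N)=L^2(0,1)\supsetneq N=L^2(0,a)$.

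\textbf{HA.} You never actually specify a decomposition. The spaces $(1+z)H^2$ and $(1-z)H^2$ are again dense and non-closed, and they intersect nontrivially. Your fallback citation does not apply. \cite{Carlos_Javier_ShadowingHalfPlane} treats unweighted composition operators on $H^2(\mathbb{C}_+)$, where the dilation $g\mapsto g(\rho\,\cdot)$ is $\rho^{-1/2}$ times a unitary. It is exactly the weight $\frac{1+\rho w}{1+w}$ that produces the annular spectrum. The paper does not go through generalized hyperbolicity here. It uses:
\begin{itemize}
\item invertibility of $C_\phi$;
\item the equivalence of shadowing and positive shadowing for invertible operators \cite{Bernardes_Alfred_Shadowing2024};
\item Pituk's criterion $\sigma_r(T)\cap\mathbb{T}=\emptyset$;
\item surjectivity of $C_\phi-\lambda I$ for $\lambda$ in the interior of the annulus \cite{MR899654}.
\end{itemize}

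\textbf{Parabolic.} The necessary condition you invoke is true. In fact, positive shadowing forces $T-\lambda I$ to be surjective for every $\lambda\in\mathbb{T}$: average the uniformly bounded solutions of $y_{n+1}=Ty_n+\lambda^{n+1}z$. But your plan to contradict it with approximate eigenvectors of $C_\phi$ fails as stated. Such vectors only show that $C_\phi-I$ is not bounded below. Since $\ker(C_\phi-I)\neq\{0\}$, this gives no information about closedness of the range. The kernel always contains the constants, and for PA it is infinite-dimensional: it contains $e^{-t\frac{1+z}{1-z}}$ whenever $t|a|\in 2\pi\mathbb{N}_0$.

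What you need is $1\in\sigma_{su}(C_\phi)$. This follows from $1\in\partial\sigma(C_\phi)$ together with $\partial\sigma(T)\subset\sigma_{su}(T)$. For PA the spectrum is $\mathbb{T}$. For PNA it is $\{e^{-at}:t\ge 0\}\cup\{0\}$, which has empty interior. This would be a genuinely shorter route than the paper's. The paper instead builds an explicit pseudo-orbit from $(1-z)^{-s}$ and controls point evaluations at $0$ and $\phi^{[n]}(0)$ via Lemma~\ref{lemma}.
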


\section{Shadowing property on \texorpdfstring{$H^{2}(\mathbb{D})$}{H2(D)}}\label{section 3}

In this section we present the proof of Theorem \ref{Theorem}. We do it by studying four cases separately. In the hyperbolic non-automorphism of type I and parabolic cases, we reduce the analysis to the corresponding canonical forms.

\subsection{EA, HNA II and LOX cases} The following general result suffices to show that linear fractional composition operators induced by elliptic, hyperbolic non-automorphism of type II and loxodromic self-maps of $\mathbb{D}$ do not have the positive shadowing property.

\begin{proposition}\label{loxodromic-elliptic}
Let $\phi$ be a holomorphic self-map of $\mathbb{D}.$ If $\phi$ has a fixed point in $\mathbb{D}$ then $C_{\phi}$ does not have the positive shadowing property on $H^2(\mathbb{D}).$ 
\end{proposition}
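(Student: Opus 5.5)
Let $p\in\mathbb{D}$ be a fixed point of $\phi$. The obstruction comes from point evaluation at $p$, which is invariant under the dynamics: for every $f\in H^2(\mathbb{D})$ and every $n\in\mathbb{N}_0$,
\begin{align*}
(C_\phi^n f)(p)=f(\phi^{[n]}(p))=f(p).
\end{align*}
Equivalently, $C_\phi^*k_p=k_p$. Along a true orbit the value at $p$ is therefore constant. A pseudo-orbit, however, can be made to drift at $p$ by an amount growing linearly in $n$, while consecutive errors stay of size $\delta$.

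I will use the natural pseudo-orbit construction from Section~\ref{section 2}. Argue by contradiction and assume $C_\phi$ has the positive shadowing property. Take $\varepsilon=1$ and let $\delta>0$ be the corresponding constant. Choose $h=k_p/\|k_p\|_2$, which has norm one, and set $x_0=0$ and
\begin{align*}
x_n=\delta\sum_{j=0}^{n-1}C_\phi^j h, \qquad n\geq 1.
\end{align*}
As noted in Section~\ref{section 2}, this is a $\delta$-pseudo-orbit. Since $(C_\phi^j h)(p)=h(p)$ for every $j$, we get
\begin{align*}
x_n(p)=n\delta\, h(p)=n\delta\,\frac{k_p(p)}{\|k_p\|_2}=n\delta\,(1-|p|^2)^{-1/2}.
\end{align*}
This is nonzero and grows linearly in $n$. (Any $h$ with $h(p)\neq0$, for instance $h\equiv 1$, would also work.)

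By shadowing, there is $f\in H^2(\mathbb{D})$ with $\|C_\phi^n f-x_n\|_2\le 1$ for all $n$. The pointwise estimate $|g(p)|\le\|g\|_2\|k_p\|_2$ applied to $g=C_\phi^n f-x_n$ gives
\begin{align*}
|f(p)-n\delta h(p)|\le \|k_p\|_2 \quad\text{for all } n.
\end{align*}
The left side tends to infinity as $n\to\infty$, which is a contradiction.

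There is no real obstacle in this argument. The only points to check are that $h(p)\ne 0$ and that the fixed-point identity $(C_\phi^n f)(p)=f(p)$ holds for an arbitrary holomorphic self-map, which is immediate. The argument never uses linear fractional structure, which is consistent with the general statement of the proposition.
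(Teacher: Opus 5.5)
Your proposal is correct and follows essentially the same route as the paper. The paper also builds the natural $\delta$-pseudo-orbit from a normalized $h$ with $h(\alpha)\neq 0$, uses $(C_\phi^j h)(\alpha)=h(\alpha)$ so that the values at the fixed point grow linearly in $n$, and concludes via the Cauchy--Schwarz estimate with $k_\alpha$ that $\|C_\phi^n g-f_n\|_2\to\infty$ for every $g\in H^2(\mathbb{D})$. The only differences are cosmetic: you specialize to $h=k_p/\|k_p\|_2$ and argue by contradiction with $\varepsilon=1$, whereas the paper shows directly that the pseudo-orbit cannot be $\varepsilon$-shadowed for any $\varepsilon>0$.
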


\begin{proof} Suppose that $\alpha\in \mathbb{D}$ is the fixed point of $\phi.$ We construct a $\delta$-pseudo-orbit of $C_{\phi}$ which cannot be $\varepsilon$-shadowed by any element of $H^2(\mathbb{D})$ and any $\varepsilon>0$ in the following way: given $\delta>0,$ we choose $f\in H^2(\mathbb{D})$ with $f(\alpha)\neq 0$ and we consider the natural  $\delta$-pseudo-orbit of $C_{\phi}$ constructed from  $h = (1/\|f\|_2)f;$ that is, $f_0 = 0$, $f_1=\delta h$ and 
\begin{align*}
f_n=\delta\sum_{j=0}^{n-1}C_{\phi}^{j}h, \quad \text{for}\ n\geq2.
\end{align*}
Since $h(\phi^{[j]}(\alpha)) =f(\alpha)/\|f\|_2$ for each $j\in \mathbb{N}_0,$ it follows that $f_1(\alpha)=\delta f(\alpha)/\|f\|_2$ and 
\begin{align*}
f_n(\alpha)= \left( \delta \sum_{j=0}^{n-1}C_{\phi}^{j}h \right)(\alpha)=\frac{\delta f(\alpha)}{\|f\|_2}n, \quad \text{for}\ n\geq2.
\end{align*}
Hence by the Cauchy-Schwarz inequality:
\begin{align*}
\|C_{\phi}^ng-f_n\|_2\|k_{\alpha}\|_2& \geq 
|g(\alpha)-f_n(\alpha)|= \left| \frac{\delta f(\alpha)}{\|f\|_2}n-g(\alpha) \right| \geq \frac{\delta |f(\alpha)|}{\|f\|_2}n-|g(\alpha)|
\end{align*}
which implies
\begin{align}\label{fixedpoint}
\|C_{\phi}^ng-f_n\|_2\geq (1-|\alpha|^2)^{1/2}\left[  \frac{\delta |f(\alpha)|}{\|f\|_2}n-|g(\alpha)|\right]
\end{align}
for every $g\in H^2(\mathbb{D}).$ Taking $n\to \infty$ in \eqref{fixedpoint}, we see that $\|C_{\phi}^ng-f_n\|_2\to \infty.$ This shows that $(f_n)_{n\geq 0}$ cannot be $\varepsilon$-shadowed for any $\varepsilon>0.$
\end{proof}

\begin{corollary}\label{elliptic-loxodormic-hyperbolic non-automorphism of type II} Let $\phi$ be an elliptic, a hyperbolic non-automorphism of type II or a loxodromic self-map of $\mathbb{D}.$ Then $C_{\phi}$ does not have the positive shadowing property on $H^{2}(\mathbb{D}).$ 
\end{corollary}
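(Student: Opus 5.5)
This is a direct application of Proposition \ref{loxodromic-elliptic}: it suffices to check that in each of the three cases the symbol $\phi$ has a fixed point inside $\mathbb{D}$.

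The check goes case by case, using the classification recalled in Section \ref{section 2}.
\begin{itemize}
\item If $\phi$ is elliptic or loxodromic, then by definition it has one fixed point in $\mathbb{D}$ and another outside $\overline{\mathbb{D}}$.
\item If $\phi$ is a hyperbolic non-automorphism of type II, then one fixed point lies on $\mathbb{T}$ and, by definition, the other lies in $\mathbb{D}$.
\end{itemize}
In every case $\phi$ has a fixed point $\alpha\in\mathbb{D}$. Since $\phi$ is also a holomorphic self-map of $\mathbb{D}$, the hypotheses of Proposition \ref{loxodromic-elliptic} are satisfied, and so $C_\phi$ does not have the positive shadowing property on $H^2(\mathbb{D})$.

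As a sanity check, the canonical forms in the table confirm this: $\phi(z)=\omega z$ fixes $0$, the HNA II form $rz/(1-(1-r)z)$ fixes $0$, and the LOX form $a(z-c)+c$ fixes $c\in\mathbb{D}$. There is no real obstacle here. The only point worth noting is that we apply the proposition to $\phi$ directly, so no appeal to similarity via Proposition \ref{similar} is needed.
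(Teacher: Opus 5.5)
Your proposal is correct and is exactly the paper's argument: the corollary is stated without a separate proof as an immediate consequence of Proposition \ref{loxodromic-elliptic}. The classification guarantees that every elliptic, HNA II or loxodromic symbol has a fixed point in $\mathbb{D}$, so the proposition applies.
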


\subsection{PA and PNA cases} In this subsection, it is enough to consider the case that $\phi$ has the form
\begin{align}\label{parabolic symbol}
\phi(z)=\frac{(2-a)z+a}{-az+(2+a)}, \quad z\in \mathbb{D},
\end{align}
where $a$ is a non-zero complex number with $\mathrm{Re}(a)\geq0,$ since each composition operator on $H^2(\mathbb{D})$ induced by a parabolic self-map of $\mathbb{D}$ is similar to a composition operator induced by a symbol as in \eqref{parabolic symbol}. We can inductively show that the iterates of \eqref{parabolic symbol} are given by
\begin{align}\label{equality1}
\phi^{[n]}(z)=\frac{(2-na)z+na}{-naz+(2+na)}, \quad z\in \mathbb{D}. 
\end{align}

Before proceeding further, we need an auxiliary result.

\begin{lemma}\label{lemma} 
Let $a$ be a non-zero complex number with $\mathrm{Re}(a)\geq 0$ and $s\in (0,1)$. There exists a constant $c > 0$ depending on $s$ and $a$ such that 
\begin{align} \left|\sum\limits_{j = 0}^{n} (2 + ja)^s \right|\geq c n^{s + 1}
\end{align}
for every $n \in \mathbb{N}$.
\end{lemma}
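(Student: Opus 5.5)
The plan is to compare the modulus of the sum with the sum of real parts, using the fact that every term lies in a fixed sector strictly inside the right half-plane. Throughout, $w^s$ denotes the principal branch, as the statement implicitly intends. Since $\mathrm{Re}(a)\geq 0$, each $w_j=2+ja$ has $\mathrm{Re}(w_j)=2+j\,\mathrm{Re}(a)\geq 2>0$. Hence $\arg w_j\in(-\pi/2,\pi/2)$, and so $\arg (w_j^s)=s\arg w_j\in(-s\pi/2,s\pi/2)$.

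The first step uses $s\in(0,1)$ to get a uniform angle bound. Since $s\pi/2<\pi/2$, we have $\cos(s\arg w_j)\geq \cos(s\pi/2)>0$, and therefore
\begin{align*}
\mathrm{Re}\big(w_j^s\big)=|w_j|^s\cos(s\arg w_j)\geq \cos(s\pi/2)\,|w_j|^s .
\end{align*}
This gives
\begin{align*}
\left|\sum_{j=0}^n w_j^s\right|\geq \mathrm{Re}\sum_{j=0}^n w_j^s\geq \cos(s\pi/2)\sum_{j=0}^n |2+ja|^s .
\end{align*}

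The second step bounds the moduli from below. Expanding,
\begin{align*}
|2+ja|^2=4+4j\,\mathrm{Re}(a)+j^2|a|^2\geq j^2|a|^2,
\end{align*}
so $|2+ja|^s\geq |a|^s j^s$. Since $x\mapsto x^s$ is increasing, we get $\sum_{j=1}^n j^s\geq \int_0^n x^s\,dx=n^{s+1}/(s+1)$. Combining the two steps,
\begin{align*}
\left|\sum_{j=0}^n (2+ja)^s\right|\geq \frac{\cos(s\pi/2)\,|a|^s}{s+1}\, n^{s+1},
\end{align*}
so the constant $c=\cos(s\pi/2)|a|^s/(s+1)>0$ works. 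It is positive because $a\neq 0$ and $s<1$.

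The only delicate point is the first step: we must ensure there is no cancellation among the complex terms. The hypotheses $\mathrm{Re}(a)\geq 0$ and $s<1$ are exactly what confine all terms to a common sector of half-angle less than $\pi/2$, which rules this out. Everything after that is elementary.
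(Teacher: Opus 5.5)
Your proof is correct and follows the paper's argument. You bound the modulus of the sum below by its real part, using that every term lies in the sector $|\arg w|<s\pi/2$; then you use $|2+ja|\geq j|a|$ and $\sum_{j=1}^n j^s\geq \int_0^n x^s\,dx$, arriving at the same constant $c=\cos(s\pi/2)|a|^s/(s+1)$, with your expansion $|2+ja|^2=4+4j\,\mathrm{Re}(a)+j^2|a|^2$ spelling out a step the paper leaves implicit.
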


\begin{proof} Given $j\in  \mathbb{N}_0$  there is $\theta_j\in (-\frac{\pi}{2}, \frac{\pi}{2})$ such that $2+ja=|2+ja|e^{i\theta_j}.$ In particular, for $s\in(0,1)$ we have $(2+ja)^s=|2+ja|^se^{is\theta_j}$ and $s\theta_j\in(-\frac{s\pi}{2}, \frac{s\pi}{2}). $ Since $\cos(\cdot)$ is a non-increasing function in $[0, \frac{\pi}{2}]$ and $s\theta_j<\frac{s\pi}{2}<\frac{\pi}{2},$ it follows that $\cos(s\theta_j)\geq \cos (\frac{s\pi}{2}):=c_1>0.$ The fact that $\cos(\cdot)$ is an even function implies that $\cos(s \theta_j) \geq c_1$ for all $\theta_j \in ( - \frac{\pi}{2}, \frac{\pi}{2})$. This allows us to obtain the following inequality
\begin{align}\label{inequality2}
\left|\sum\limits_{j = 0}^{n} (2 + ja)^s \right| & \geq \left|\mathrm{Re} ( \sum\limits_{j = 0}^{n} (2 + ja)^s )\right| = \left|\sum\limits_{j = 0}^{n} \mathrm{Re}  ((2 + ja)^s)  \right|\nonumber \\
& =  \sum\limits_{j = 0}^{n} |2 + ja|^s \cos(s \theta_j) {\geq} \sum\limits_{j = 0}^{n} |ja|^s \cos(s \theta_j)\nonumber \\ 
& \geq c_1 |a|^{s} \sum\limits_{j = 0}^{n} j^{s},
\end{align}
where in the penultimate inequality we used that $a \neq 0$ and $\mathrm{Re}(a) \geq 0$. On the other hand, since $j^s \geq \displaystyle \int_{j - 1}^{j} x^{s}dx,$ it follows that
\begin{align}\label{inequality3}
\sum_{j = 1}^{n} j^s \geq \sum_{j = 1}^{n}  \left( \int_{j - 1}^{j} x^sdx \right) = \int_{0}^{n} x^sdx = \frac{n^{s + 1}}{s + 1}.
\end{align}
Therefore by \eqref{inequality2} and \eqref{inequality3}, we obtain the estimate
\begin{align*}
\left|\sum\limits_{j = 0}^{n} (2 + ja)^s \right| \geq \frac{c_1 |a|^{s}}{s + 1} n^{s + 1},
\end{align*}
which completes the proof.
\end{proof}

 We are now ready for the main result of this subsection.

\begin{theorem}\label{parabolic} Let $\phi$ be a parabolic self-map of $\mathbb{D}.$ Then  $C_{\phi}$ does not have the positive shadowing property on $H^{2}(\mathbb{D}).$ 
\end{theorem}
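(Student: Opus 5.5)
By Proposition \ref{similar} it suffices to treat $\phi$ in the canonical form \eqref{parabolic symbol} with $a\neq 0$ and $\mathrm{Re}(a)\geq 0$. The iterates are then given explicitly by \eqref{equality1}. As in Proposition \ref{loxodromic-elliptic}, I will use the natural $\delta$-pseudo-orbit $f_0=0$, $f_n=\delta\sum_{j=0}^{n-1}C_\phi^j h$, and show that no $g\in H^2(\mathbb{D})$ can keep $\|C_\phi^n g-f_n\|_2$ bounded. The plan has two ingredients: choose $h$ so that $f_n$ becomes large, and find a single test vector against which $C_\phi^n g$ cannot grow.

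For $h$, I take $h(z)=c_s(1-z)^{s}$ with $s\in(0,1)$ chosen suitably, say $s<1/2$, and $c_s$ a normalizing constant. This function lies in $H^2(\mathbb{D})$ because it is bounded. From \eqref{equality1} we get
\[
1-\phi^{[j]}(z)=\frac{2(1-z)}{-jaz+2+ja}=\frac{2(1-z)}{(2+ja)-ja z}.
\]
Evaluating near the Denjoy--Wolff point is hard, so instead I pair with the evaluation at $z=0$. Cauchy--Schwarz with $k_0$, whose norm is $1$, gives $|F(0)|\leq\|F\|_2$. This yields $C_\phi^j h(0)=c_s\, 2^s(2+ja)^{-s}$, using the principal branch, since $\mathrm{Re}(2+ja)>0$. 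Hence
\[
f_n(0)=\delta c_s 2^s\sum_{j=0}^{n-1}(2+ja)^{-s}.
\]
On the other hand $C_\phi^n g(0)=g(\phi^{[n]}(0))$. Because $|\phi^{[n]}(0)|\to 1$ and $g$ satisfies $|g(w)|\leq\|g\|_2(1-|w|^2)^{-1/2}$, this term can grow.

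The main obstacle is to compare these two growth rates. By \eqref{equality1}, $\phi^{[n]}(0)=na/(2+na)$, so $1-|\phi^{[n]}(0)|^2$ is of order $1/n$ in the non-automorphic case and of order $1/n^2$ in the automorphic case. Thus $|g(\phi^{[n]}(0))|=O(n^{1/2})$ or $O(n)$, or in fact $o(\cdot)$ of these rates, since kernel evaluation bounds become little-$o$ along sequences tending to the boundary. Meanwhile Lemma \ref{lemma}, applied in the form that the sum of $(2+ja)^{-s}$ has real part at least a constant times $n^{1-s}$ (the same cosine argument works with exponent $-s$), gives $|f_n(0)|\gtrsim n^{1-s}$. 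That beats $n^{1/2}$ when $s<1/2$, but not $n$ in the PA case.

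The lemma as stated uses positive powers $(2+ja)^s$, which suggests a better choice: take $h$ to be a normalized multiple of $(1-z)^{-s}$ instead, with $s<1/2$ so that $h\in H^2$. Then $C_\phi^j h(0)$ is a constant times $(2+ja)^{s}$, and Lemma \ref{lemma} gives $|f_n(0)|\geq c\,\delta n^{s+1}$. This exceeds $n$, and hence the bound $|g(\phi^{[n]}(0))|\leq \|g\|_2(1-|\phi^{[n]}(0)|^2)^{-1/2}=O(n)$, which holds in both the PA and PNA cases. Therefore
\[
\|C_\phi^n g-f_n\|_2\geq |f_n(0)|-|g(\phi^{[n]}(0))|\geq c\,\delta n^{s+1}-Cn\to\infty
\]
for every $g$. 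So the pseudo-orbit cannot be $\varepsilon$-shadowed for any $\varepsilon$, and the theorem follows. The step that needs checking is the computation of $1-|\phi^{[n]}(0)|^2\gtrsim n^{-2}$ uniformly over the allowed $a$, which follows from $|2+na|\leq C n$.
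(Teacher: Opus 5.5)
Your proof is correct and is essentially the paper's argument. You use the same test function $(1-z)^{-s}$ with $s<\frac12$ and the same evaluation at $0$, which gives $|f_n(0)|\gtrsim (n-1)^{s+1}$ via Lemma~\ref{lemma}, played against the kernel bound $|g(\phi^{[n]}(0))|\le \|g\|_2(1-|\phi^{[n]}(0)|^2)^{-1/2}=O(n)$; your direct subtraction $|f_n(0)|-|g(\phi^{[n]}(0))|$ is slightly cleaner than the paper's rescaled inequality, and the discarded first attempt with $(1-z)^{s}$ can simply be dropped.
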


\begin{proof} Suppose that $\phi$ has the form 
\begin{align*}
\phi(z)=\frac{(2-a)z+a}{-az+(2+a)}, \quad z\in \mathbb{D},
\end{align*}
where $a \neq 0$ and $\mathrm{Re}(a)\geq 0.$ We will construct a $\delta$-pseudo-orbit of $C_{\phi}$ which cannot be $\varepsilon$-shadowed by any element of $H^2(\mathbb{D})$ and any $\varepsilon>0$. Fix $s\in (0, \frac{1}{2})$, then $f(z)=(1-z)^{-s}$ belongs to $H^2(\mathbb{D})$ (see \cite[Example 1.1.14]{Rosenthal_Martinez}). Given $\delta>0,$ we consider the natural $\delta$-pseudo-orbit of $C_{\phi}$ constructed from $h=(1/\|f\|_2)f;$ that is, $f_0 = 0$, $f_1= \delta h$ and 
\begin{align*}
f_n=\delta\sum_{j=0}^{n-1}C_{\phi}^{j}h, \quad \text{for} \ n\geq 2.
\end{align*}
By using \eqref{equality1}, we can compute $f_n(0)$ precisely. Indeed, $h(0)=1/\|f\|_2$ and for $j\in \mathbb{N},$ we obtain 
\begin{align*}
h(\phi^{[j]}(0))=\frac{1}{2^s\|f\|_2}(2+ja)^s,
\end{align*}
which gives $f_1(0)=\delta /\|f\|_2$ and 
\begin{align*}
f_n(0)&=\delta\sum_{j=1}^{n-1}h(\phi^{[j]}(0))+\delta h(0)= \frac{\delta}{2^s\|f\|_2} \sum_{j=0}^{n-1}(2+ja)^s\quad \text{for} \ n\geq 2.
\end{align*}
 By the Cauchy-Schwarz inequality, we get
\begin{align*}
 \|C_{\phi}^ng-f_n\|_2\|k_0\|_2+\|g\|_2\|k_{\phi^{[n]}(0)}\|_2 \geq  |g(\phi^{[n]}(0))-f_n(0)|+ |g(\phi^{[n]}(0))| \geq |f_n(0)|.
\end{align*}
Thus,  
\begin{align*}
2\|C_{\phi}^ng-f_n\|_2\geq |f_n(0)|(1-|\phi^{[n]}(0)|^2)^{1/2}  -\|g\|_2    
\end{align*}
holds for all $n\in \mathbb{N}.$ By Lemma \ref{lemma} and the following inequality
\begin{align*}
1-|\phi^{[n]}(0)|^2&=1 - \frac{n^2 |a|^2}{(2 + n \mathrm{Re}(a))^2 + (n \mathrm{Im}(a))^2} = \frac{4 + 4 n \mathrm{Re}(a)}{ 4 + 4 n \mathrm{Re}(a) + n^2 |a|^2}\\
&\geq \frac{4}{n^2(2+|a|)^2}
 \quad (n\in \mathbb{N})
\end{align*}
we conclude that there exists a constant $c >0$ (depending on $s$ and $a$) such that
\begin{align}\label{inequality4}
\|C_{\phi}^ng-f_n\|_2\geq \frac{\delta c}{2^s\|f\|_2(2+|a|)}\frac{(n - 1)^{s + 1}}{n}-\frac{\|g\|_2}{2},
\end{align}
for every $g\in H^2(\mathbb{D})$ and all $n\in \mathbb{N}.$ Taking $n\to \infty$ in \eqref{inequality4}, we see that $\|C_{\phi}^ng-f_n\|_2\to \infty.$ This shows that $(f_n)_{n \geq 0}$ cannot be $\varepsilon$-shadowed for any $\varepsilon>0.$
\end{proof}

\subsection{HA case}  We will use the recent characterization by Pituk \cite{Pituk}.

\begin{teo}\label{Pituk} \normalfont (\cite[Theorem 1.7]{Pituk}) Let $X$ be a complex Hilbert space and $T\in \mathcal{B}(X)$ be an invertible operator. Then $T$ has the shadowing property if and only if $\sigma_r(T)\cap \mathbb{T}=\emptyset.$ 
\end{teo}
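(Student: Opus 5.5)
The plan is to reduce shadowing to the surjectivity of a single operator on bounded sequences, and then analyse that operator through the family $T-\lambda I$, $\lambda\in\mathbb{T}$.

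\textbf{Reduction.} On $\ell^\infty(\mathbb{Z},X)$ consider
\begin{align*}
(Lz)_n=z_{n+1}-Tz_n, \qquad n\in\mathbb{Z}.
\end{align*}
Let $(x_n)$ be a $\delta$-pseudo-orbit and put $e_n=x_{n+1}-Tx_n$, so $\|e_n\|\le\delta$. For a candidate $x\in X$, set $z_n=x_n-T^nx$. Then $Lz=e$, because $T^{n+1}x-T(T^nx)=0$. Conversely, every $z$ with $Lz=e$ satisfies $z_n=x_n-T^nx$ with $x=x_0-z_0$, since $x_n-z_n$ is then a genuine orbit.

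So shadowing says exactly that each $e\in\ell^\infty$ of norm at most $\delta$ has a preimage of norm at most $\varepsilon$. By linearity and scaling, this is equivalent to surjectivity of $L$. The uniform $\varepsilon$–$\delta$ relation then comes for free from the open mapping theorem. It therefore suffices to show:
\begin{align*}
L \text{ is onto} \iff T-\lambda I \text{ is onto for every } \lambda\in\mathbb{T}.
\end{align*}
In a Hilbert space, surjectivity of $T-\lambda I$ is the same as right invertibility, which is the condition $\lambda\notin\sigma_r(T)$.

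\textbf{Necessity.} Fix $\lambda\in\mathbb{T}$ and $y\in X$, and take $e_n=\lambda^{n+1}y$. Choose a bounded $z$ with $Lz=e$. Multiplying the equations by $\lambda^{-n-1}$ and averaging over $|n|\le N$, set $a_N=\frac{1}{2N+1}\sum_{|n|\le N}\lambda^{-n}z_n$. Since $z$ is bounded, replacing the shifted average by $a_N$ costs only $O(1/N)$, which gives
\begin{align*}
a_N-\lambda^{-1}Ta_N=y+O(1/N).
\end{align*}
The sequence $(a_N)$ is bounded, so by reflexivity it has a weak cluster point $w$. Passing to the limit yields $(\lambda-T)w=\lambda y$. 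Hence $T-\lambda I$ is onto.

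\textbf{Sufficiency.} For $\lambda\in\mathbb{T}$, use the Moore–Penrose right inverse
\begin{align*}
R(\lambda)=(T-\lambda)^*\big((T-\lambda)(T-\lambda)^*\big)^{-1}.
\end{align*}
Surjectivity of $T-\lambda I$ makes $(T-\lambda)(T-\lambda)^*$ invertible. Invertibility is an open condition and $\mathbb{T}$ is compact, so $R$ is real-analytic in $\lambda$ on a neighbourhood of $\mathbb{T}$ and uniformly bounded there.

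Formally, $L$ is conjugated by the vector-valued Fourier transform to multiplication by $\lambda^{-1}\cdot$ (up to sign) $(\lambda-T)$. This suggests the solution operator
\begin{align*}
z_n=\sum_{k\in\mathbb{Z}}G_{n-k}e_k, \qquad G_m=\frac{-1}{2\pi i}\oint_{\mathbb{T}}\lambda^{m}R(\lambda)\,d\lambda,
\end{align*}
with the exact power of $\lambda$ to be fixed by the convention.

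\textbf{Main obstacle.} The key step is to make this convolution work on $\ell^\infty$ rather than $\ell^2$. Because $R$ is analytic on an annulus around $\mathbb{T}$, shifting the contour gives exponential decay $\|G_m\|\le Cq^{|m|}$ with $q<1$. Hence $\sum_m\|G_m\|<\infty$, and the series defining $z$ converges absolutely for every bounded $e$.

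It then remains to verify $Lz=e$. I would do this by checking it first on finitely supported $e$, where it reduces to the identity $(T-\lambda)R(\lambda)=I$ integrated against $\lambda^m$. The identity extends to all of $\ell^\infty$ by absolute convergence, applied coordinatewise.
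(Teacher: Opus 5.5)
The paper gives no proof of this statement. It quotes Pituk's theorem as a black box and only ever uses its ``if'' direction, in the hyperbolic automorphism case. Your argument is therefore an independent, self-contained route, and it is essentially sound:
\begin{itemize}
\item the reduction of two-sided shadowing to surjectivity of $L$ on $\ell^\infty(\mathbb{Z},X)$ works, using invertibility of $T$ to run pseudo-orbits and orbits to negative times and the open mapping theorem for the uniform $\varepsilon$--$\delta$;
\item the averaging plus weak-compactness argument for necessity works;
\item the convolution solution for sufficiency works.
\end{itemize}

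One step is wrong as written. $R(\lambda)=(T-\lambda)^*\big((T-\lambda)(T-\lambda)^*\big)^{-1}$ depends on $\bar\lambda$, so it is real-analytic but not holomorphic, and Cauchy's theorem does not let you shift the contour for it. The repair is to use $\bar\lambda=\lambda^{-1}$ on $\mathbb{T}$. Put $\widetilde R(\lambda)=(T^*-\lambda^{-1})\big((T-\lambda)(T^*-\lambda^{-1})\big)^{-1}$. On $\mathbb{T}$ the bracket equals $(T-\lambda)(T-\lambda)^*$, so it is invertible there and on a nearby annulus $r_1<|\lambda|<r_2$. On that annulus $\widetilde R$ is holomorphic and satisfies $(T-\lambda)\widetilde R(\lambda)=I$, and it agrees with $R$ on $\mathbb{T}$. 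Your contour shift then gives $\|G_m\|\le Cq^{|m|}$.

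Even simpler: $\theta\mapsto R(e^{i\theta})$ is $C^2$ in operator norm, so two integrations by parts already give $\|G_m\|=O(m^{-2})$. That is all the summability you need.

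Fix the convention as $G_m=-\frac{1}{2\pi i}\oint_{\mathbb{T}}\lambda^{m-1}R(\lambda)\,d\lambda$. Then $G_{m+1}-TG_m=\delta_{m,0}I$ follows directly from $(T-\lambda)R(\lambda)=I$ on $\mathbb{T}$. Hence $z_n=\sum_k G_{n-k}e_k$ solves $Lz=e$ for every bounded $e$ with no density step.

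Compared with the citation, your approach costs a page. In return it shows exactly where the Hilbert structure enters: surjective equals right invertible, the smooth choice of right inverse, and reflexivity in the necessity part. It also yields an explicit linear shadowing operator with constant $\sum_m\|G_m\|$. In fact, you can drop the Moore--Penrose formula in favour of local holomorphic right inverses $R_0\big(I+(\lambda_0-\lambda)R_0\big)^{-1}$, glued with a partition of unity on $\mathbb{T}$; this is legitimate because the right inverses of a fixed operator form an affine set. With that change your sufficiency argument works in any Banach space.
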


Nordgren, Rosenthal and Wintrobe \cite[Corollary 5.9]{MR899654} showed that if $\phi$ is a hyperbolic automorphism of $\mathbb{D}$ then $C_{\phi}-\lambda I$ is surjective for each $\lambda$  in the interior of $\sigma(C_{\phi}).$
Furthermore, it was shown in \cite[Theorem 7.4]{Cowen} that $\phi$ has a unique fixed point $\alpha\in \mathbb{T}$ such that $0<\phi'(\alpha)<1$ and the spectrum of $C_{\phi}$  is:
\begin{align}\label{spectrum}
\sigma(C_{\phi})=\{\lambda\in \mathbb{C}:\phi'(\alpha)^{1/2}\leq |\lambda|\leq \phi'(\alpha)^{-1/2}\}.
\end{align}

\begin{proposition}\label{hyperbolic automorphism} Let $\phi$ be a hyperbolic automorphism of $\mathbb{D}.$ Then $C_{\phi}$ has the positive shadowing property on $H^2(\mathbb{D}).$
\end{proposition}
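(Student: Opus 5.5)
Since $\phi$ is a hyperbolic automorphism, $C_{\phi}$ is invertible with inverse $C_{\phi^{-1}}$. By \cite[Theorem 1]{Bernardes_Alfred_Shadowing2024}, positive shadowing and shadowing coincide for invertible operators. It therefore suffices to show that $C_{\phi}$ has the shadowing property. By Theorem \ref{Pituk} this is equivalent to $\sigma_r(C_{\phi})\cap\mathbb{T}=\emptyset$. So the plan is to show that $C_{\phi}-\lambda I$ is right invertible for every $\lambda\in\mathbb{T}$.

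First I use \eqref{spectrum}. Since $0<\phi'(\alpha)<1$, we have $\phi'(\alpha)^{1/2}<1<\phi'(\alpha)^{-1/2}$. Hence $\mathbb{T}$ lies in the interior of the annulus $\sigma(C_{\phi})$, namely $\{\phi'(\alpha)^{1/2}<|\lambda|<\phi'(\alpha)^{-1/2}\}$. By \cite[Corollary 5.9]{MR899654}, for each $\lambda$ in this interior, in particular each $\lambda\in\mathbb{T}$, the operator $C_{\phi}-\lambda I$ is surjective.

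Next I upgrade surjectivity to right invertibility. This is automatic on a Hilbert space. Let $T=C_{\phi}-\lambda I$, which is surjective on $H^2(\mathbb{D})$. Then $\ker T$ is closed. The restriction $T|_{(\ker T)^{\perp}}:(\ker T)^{\perp}\to H^2(\mathbb{D})$ is a continuous bijection, so by the open mapping theorem it has a bounded inverse $S$. This $S$ satisfies $TS=I$. Hence $\lambda\notin\sigma_r(C_{\phi})$. Thus $\sigma_r(C_{\phi})\cap\mathbb{T}=\emptyset$, and Theorem \ref{Pituk} gives the shadowing property. The invertible-case equivalence then yields the positive shadowing property.

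The only genuine content is locating $\mathbb{T}$ strictly inside the spectrum, which \eqref{spectrum} delivers directly, and invoking the surjectivity result. The step I would check most carefully is the logical chain between the positive and two-sided notions. It has two parts. Theorem \ref{Pituk} concerns two-sided shadowing, which uses pseudo-orbits indexed by $\mathbb{Z}$ and is defined using the inverse. The passage from shadowing to positive shadowing then uses the cited equivalence of Bernardes and Peris. Even without that equivalence, shadowing implies positive shadowing directly. Given a positive $\delta$-pseudo-orbit $(x_n)_{n\ge 0}$, extend it to negative indices by $x_{-n}=T^{-n}x_0$. This is an exact orbit backward, so the extended sequence is a two-sided $\delta$-pseudo-orbit. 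Shadowing it and restricting to $n\ge 0$ gives the required orbit.
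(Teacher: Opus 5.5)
Your proposal is correct and follows the paper's proof step for step: invertibility plus the Bernardes--Peris equivalence, Pituk's criterion, the annular spectrum \eqref{spectrum} placing $\mathbb{T}$ in its interior, surjectivity of $C_{\phi}-\lambda I$ from Nordgren--Rosenthal--Wintrobe, and the Hilbert-space upgrade from surjective to right invertible. Your explicit construction of the right inverse via $(\ker T)^{\perp}$ and the open mapping theorem, and your observation that two-sided shadowing directly implies positive shadowing (extend the pseudo-orbit backward by $x_{-n}=T^{-n}x_0$), are both valid; they only make explicit what the paper obtains by citation.
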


\begin{proof} Since $C_{\phi}$ is invertible, it follows that the notions of shadowing and positive shadowing coincide \cite[Theorem 1]{Bernardes_Alfred_Shadowing2024}. Then by Theorem \ref{Pituk}, it is enough  to show that $\sigma_r(C_{\phi})\cap\mathbb{T}=\emptyset.$ If $\lambda \in \mathbb{T},$ then \eqref{spectrum} tells us that $\lambda$ is in the interior of $\sigma(C_{\phi}).$ Hence $C_{\phi}-\lambda I$ is surjective. Since each surjective operator on a Hilbert space has a bounded right inverse \cite[Chap. XI, Proposition 1.1, p. 347]{MR1070713}, it follows that $\lambda\notin \sigma_r(C_{\phi}).$ Therefore, we conclude that $C_{\phi}$ has the shadowing property.
\end{proof}

\subsection{HNA I case}  In this subsection, it is enough to consider
\begin{align}\label{hyperbolic-non}
\phi(z)=az+1-a,\quad z\in \mathbb{D},
\end{align}
where $a\in (0,1)$. As the positive shadowing property is invariant under similarity, we will transform the problem of deciding whether $C_{\phi}$ has the positive shadowing property on $H^2(\mathbb{D})$ into the problem of deciding whether a certain operator similar to $C_{\phi}$ has the positive shadowing property. We now introduce the auxiliary operators and
Hilbert spaces that play a key role in our study.

Let $\mathbb{C}_+$ denote the open right half-plane $\{z\in \mathbb{C}:\mathrm{Re}(z)>0\}.$ The Hardy space of the open right half-plane $H^2(\mathbb{C}_+)$ is the Hilbert space of all $f\in \mathrm{Hol}(\mathbb{C}_+)$ for which 
\begin{align*}
\|f\|_{H^2(\mathbb{C}_+)}:= \left( \sup_{0<x<\infty}\int_{-\infty}^{\infty}|f(x+iy)|^2dy\right)^{1/2} < \infty.  
\end{align*}
The only linear fractional self-maps of $\mathbb{C}_+$ that induce bounded composition operators on $H^2(\mathbb{C}_+)$ are those of the form 
 \begin{align}\label{fractional self-map of the right half-plane}
 \psi(w)=aw+b, \quad w\in \mathbb{C}_+,
 \end{align} where $a>0$ and $\mathrm{Re}(b)\geq0$ (see \cite{Elliot-Jury}). Henceforth, for a  fixed $a \in (0,1)$, let us denote by $\psi_a$ the self-map of $\mathbb{C}_+$ given by $\psi_a
(w)=a^{-1}w+(a^{-1}-1).$
 
Recall that $L^2(\mathbb{R}_+, dt)$ is the Hilbert space of all square-integrable measurable complex-valued functions on  $\mathbb{R}_+,$ where $\mathbb{R}_+=\{t\in \mathbb{R}:t>0\}.$ For simplicity, we write $L^2$ instead of $L^2(\mathbb{R}_+, dt)$.  By the \textit{Paley-Wiener theorem}, the transformation defined by  
\begin{align*}
(PF)(w)=\int_{\mathbb{R}_+}F(t)e^{-tw}dt
\end{align*}
is an invertible operator from $L^2$ onto $H^2(\mathbb{C}_+).$ In \cite[Lemma 9]{Carmo-Noor}, Carmo and Noor showed that if $\psi$ is as in \eqref{fractional self-map of the right half-plane}, then $aC_{\psi}:H^2(\mathbb{C}_+)\to H^2(\mathbb{C}_+)$ is similar, via the operator $P,$ to the operator $W: L^2\to L^2$ given by $(WF)(t)=e^{-bt/a}F(t/a).$ In particular, it follows that $a^{-1}C_{\psi_a}$ on $H^2(\mathbb{C}_+)$ is similar to the operator $W_a:L^2\to L^2$ where
\begin{align*}
(W_aF)(t)=e^{-t(1-a)}F(at).
\end{align*}
On the other hand, it is known that each bounded composition operator $C_{\psi}$ on $H^2(\mathbb{C}_+)$ is similar, via an invertible operator $\mathcal{U},$ to the \textit{weighted composition operator} $W_{\Phi}$ on $H^2(\mathbb{D})$ defined by
\begin{align}\label{operator}
(W_{\Phi}f)(z)=\frac{1-\Phi(z)}{1-z}f(\Phi(z)),
\end{align}
where $\Phi:=\gamma^{-1}\circ \psi\circ \gamma$ and $\gamma$ is the \textit{Cayley transform} from $\mathbb{D}$ onto $\mathbb{C}_+$ defined by $\gamma(z)=\frac{1+z}{1-z}.$ Now we note that if $a\in (0,1)$ and $\phi$ is as in \eqref{hyperbolic-non} then $\Phi(z)=(\gamma^{-1}\circ\psi_a\circ \gamma)(z)=\phi(z)$ and $\frac{1-\Phi(z)}{1-z}=a$. This allows us to conclude that $C_{\psi_a}$ is similar to $aC_{\phi}$, or equivalently, $a^{-1}C_{\psi_a}$ is similar to $C_{\phi}$ (for instance, see \cite[Lemma 14]{Carmo-Noor}). In particular, $C_{\phi}$ is also similar to $W_a.$ We have summarized these observations in the following commutative diagram:
\[
\xymatrix{
H^2(\mathbb{D}) \ar[r]^{C_{\phi}} \ar[d]_{\mathcal{U}} & H^2(\mathbb{D}) \ar[d]^{\mathcal{U}} \\
H^2(\mathbb{C_+})\ar[r]_{a^{-1}C_{\psi_a}} \ar[d]_{P^{-1}} & H^2(\mathbb{C_+}) \ar[d]^{P^{-1}}\\
L^2 \ar[r]_{W_a} & L^2
}
\]

We are now ready to state the main result of this subsection.

\begin{theorem}\label{Teo Difícil} If $a\in (0,1)$ then $W_a$ is a generalized hyperbolic operator.
\end{theorem}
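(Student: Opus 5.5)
The natural decomposition splits $\mathbb{R}_+$ into a neighbourhood of $0$ and its complement, since $W_a$ pulls values from $at$ (closer to $0$) and multiplies by the weight $e^{-t(1-a)}$, which is small for large $t$ and close to $1$ near $0$. Iterating, we get
\begin{align*}
(W_a^nF)(t)=\exp\Bigl(-t(1-a)\textstyle\sum_{k=0}^{n-1}a^k\Bigr)F(a^nt)=e^{-t(1-a^n)}F(a^nt).
\end{align*}
This formula looks contractive only in a weak sense near $0$. A sharper view comes from the change of variables $t=e^{x}$: on $L^2(\mathbb{R}_+,dt)$ the map $F\mapsto e^{x/2}F(e^x)$ is unitary onto $L^2(\mathbb{R},dx)$. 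Under it, $F(a\cdot)$ becomes $a^{-1/2}$ times a translation by $\log a$. Hence $W_a$ is unitarily equivalent to a weighted backward-type shift
\begin{align*}
(\widetilde W g)(x)=a^{-1/2}e^{-(1-a)e^{x}}\,g(x+\log a).
\end{align*}
The weight $w(x)=a^{-1/2}e^{-(1-a)e^x}$ tends to $a^{-1/2}>1$ as $x\to-\infty$ and to $0$ as $x\to+\infty$. So the operator expands on the left half-line and contracts on the right. This is exactly the shape of a generalized hyperbolic weighted shift.

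I would take $M=\{F\in L^2: F=0 \text{ a.e. on } (0,1)\}$ and $N=\{F\in L^2: F=0 \text{ a.e. on } [1,\infty)\}$. Then $L^2=M\oplus N$, with both subspaces closed.

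\emph{Invariance of $M$.} If $F$ vanishes on $(0,1)$, then $F(at)$ vanishes for $t<1/a$, in particular on $(0,1)$. Hence $W_aM\subset M$.

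\emph{Spectral radius on $M$.} For $F\in M$, the function $W_a^nF$ is supported in $t\ge a^{-n}$. There the weight satisfies $e^{-t(1-a^n)}\le e^{-(a^{-n}-1)}$. Since $\|F(a^n\cdot)\|=a^{-n/2}\|F\|$, we get
\begin{align*}
\|W_a^n|_M\|\le a^{-n/2}e^{-(a^{-n}-1)}.
\end{align*}
This bound is superexponentially small, so $r(W_a|_M)=0<1$.

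\emph{Behaviour on $N$.} For $F\in N$ (supported in $(0,1)$), $W_aF$ is supported in $(0,1/a)$, so $N\subset W_a(N)$ should hold. To check this, take $G\in N$ and define $F(s)=e^{(s/a)(1-a)}G(s/a)$. Then $F$ is supported in $(0,a)\subset(0,1)$ and $W_aF=G$. Since the multiplier is bounded on the relevant range, $W_a|_N$ is injective with closed range $W_a(N)=L^2(0,1/a)$. The inverse on $N$ is
\begin{align*}
(W_a|_N)^{-1}G(s)=e^{s(1-a)/a}G(s/a).
\end{align*}
It maps $N$ into functions supported in $(0,a)\subset N$. Its $n$-th power has norm at most $a^{n/2}e^{C}$, using $\sum_k a^{k}(1-a)/a\cdot a\le 1$ for the exponent on $(0,1)$. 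Hence $r\bigl((W_a|_N)^{-1}|_N\bigr)\le a^{1/2}<1$.

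The main obstacle I expect is bookkeeping in the definition: checking that $W_a|_N:N\to W_a(N)$ is bijective onto a closed subspace, and getting the $n$-th power estimate for the inverse uniformly. The dilation factor $a^{n/2}$ gives the contraction, while the exponential factor $\exp\bigl(s(1-a)(a^{-1}+\dots+a^{-n})\bigr)$, evaluated for $s$ in the shrinking support $(0,a^n)$, stays bounded by $e^{1}$.
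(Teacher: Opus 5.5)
Your proof is correct and follows the paper's argument. The only difference is where you split $L^2$: the paper cuts at $a$ rather than $1$, taking $M$ as the functions vanishing on $(0,a)$ and $N$ as those vanishing on $[a,\infty)$, so that $W_a(N)$ becomes the functions vanishing on $[1,\infty)$. It then uses the same iterate formula, the same change of variables for $\|W_a^n|_M\|$, and the same explicit inverse $e^{t(1-a)/a}F(t/a)$ with $\|V_a^n\|\le a^{n/2}e^{a(1-a^n)}$, so the choice of cut point is immaterial. The weighted-shift picture on $L^2(\mathbb{R},dx)$ is only motivation, and your garbled remark about $\sum_k a^k(1-a)/a\cdot a\le 1$ should simply read that the exponent of the $n$-th power is $s(a^{-n}-1)\le 1-a^n$ on the support $(0,a^n)$, as your last paragraph correctly says.
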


\begin{proof}  Observe that $L^2$ is the direct sum of the following closed subspaces $M=\{F\in L^2:F \ \text{vanishes a.e. on}\ (0,a) \}$ and $N=\{F\in L^2:F\ \text{vanishes a.e. on}\ [a, \infty) \}.$  We will show that $M$ and $N$ satisfy the conditions of the definition of generalized hyperbolicity. 

\begin{claim}
$W_a(M)\subset M$ and $r({W_a}_{|M})<1.$ 
\end{claim}
It is easy to see that $W_a(M)\subset M.$ Inductively, we see that the $n$-th iterate of $W_a$ is given by
\begin{align*}
(W_a^nF)(t)=e^{-t(1-a^n)}F(a^nt),
\end{align*}
and by the change of variable $s=a^nt$ we obtain the following estimate
\begin{align*}
\|W_a^n F\|^2_{L^2} & = \int_{\mathbb{R}_+} |(W_a^nF)(t)|^2 dt =\int_{\mathbb{R}_+} |e^{-t(1 - a^n)} F(a^nt)|^2 dt\\
&=\frac{1}{a^n} \int_{\mathbb{R}_+} e^{-2s( \frac{1}{a^n} - 1)} |F(s)|^2 ds  =  \frac{1}{a^n} \int_{a}^{\infty} e^{-2s( \frac{1}{a^n} - 1)} |F(s)|^2 ds \\
&\leq  \frac{e^{-2a( \frac{1}{a^n} - 1)}}{a^n}  \|F\|_{L^2}^2
\end{align*}
for all $F\in M.$ Hence, for all $n \in \mathbb{N}$
\begin{align*}
\|(W_a)^n_{|M}\|^{\frac{1}{n}} \leq  \frac{e^{-\frac{a}{n}( \frac{1}{a^n} - 1)}}{\sqrt{a}} .
\end{align*}
 Since $a\in(0,1),$ it follows that $\frac{a}{n}( \frac{1}{a^n} - 1) =  \frac{ 1 - a^n}{na^{n - 1}} \to  \infty$ as $n \to \infty.$ Therefore, $r({W_a}_{|M}) = 0$.

\begin{claim} ${W_a}|_{N}: N \to W_a(N)$ is a bijection, $W_a(N)$ is closed,  $N \subset W_a(N)$ and $r((W_a{_{|N}})^{-1})_{|N})< 1.$ 
\end{claim}

Since $W_a$ is injective, it follows that ${W_a}_{|N}$ is injective and hence ${W_a}|_{N}: N \to W_a(N)$ is a bijection. In order, let us show  that $W_a(N)$ is closed. We will do this by showing that $W_a(N)=\{F\in L^2:F\ \text{vanishes a.e. on}\ [1, \infty) \}.$ Indeed,  for $F\in N$ we have $F(at)=0$ for a.e. $t\geq1$ because $at \geq a$. Thus $W_aF$ vanishes on $[1,\infty).$ This gives the inclusion $``\subset".$ On the other hand, given $F\in L^2$ that vanishes a.e. on $[1,\infty),$ we define the following function
\begin{align}\label{surjective}
G(t)=\begin{cases}
e^{\frac{1-a}{a}t}F(t/a), & 0<t<a,\\
0,& t\geq a.
\end{cases}
\end{align}
By the change of variable $s=t/a,$ we get
\begin{align*}
\int_{\mathbb{R}_+}|G(t)|^2dt&=\int_{0}^{a} e^{\frac{2(1-a)}{a}t}|F(t/a)|^2dt = a\int^1_0 e^{2s(1-a)}|F(s)|^2ds\\
&\leq ae^{2(1-a)}\int^1_0|F(s)|^2ds\leq ae^{2(1-a)}\|F\|_{L^2}^2
\end{align*}
which shows that $G\in L^2.$ Moreover, we have $G\in N$ and
\begin{align*}
(W_aG)(t)&=e^{-t(1-a)}G(at)=\begin{cases}
e^{-t(1-a)}e^{\frac{1-a}{a}at}F(t), & 0<t<1,\\
0,& t\geq 1
\end{cases}\\
&=F(t).
\end{align*}
Since $W_aG=F,$ we obtain the inclusion $``\supset ".$ In particular, it follows that $N\subset W_a(N).$ Now, for each $F\in N$ we define
\begin{align*}
(V_aF)(t)=e^{\frac{1-a}{a}t}F(t/a).
\end{align*}




Note that $V_aF$ vanishes a.e. on $[a,\infty)$ (indeed, it vanishes in $[a^2, \infty) \supset [a, \infty)$) and  by the change of variable $s=t/a$ we get  
\begin{align*}
\int_{\mathbb{R}_+}|(V_aF)(t)|^2dt&=\int_{\mathbb{R}_+}e^{\frac{2t}{a}(1-a)}|F(t/a)|^2dt=a\int_{\mathbb{R}_+}e^{2s(1 - a)}|F(s)|^2ds\\
&= a\int_{0}^{a} e^{2s(1-a)}|F(s)|^2ds\leq ae^{2a(1-a)}\|F\|^2_{L^2},
\end{align*}
which shows that $V_a: N \to N$ is a well defined operator. Moreover, a simple computation gives ${W_a}_{|N} \circ V_a=I$  and hence 
\begin{align*}
({W_a}_{|N})^{-1}F=({W_a}_{|N})^{-1}\circ({W_a}_{|N}) \circ V_aF = V_aF
\end{align*}
for each $F\in N.$ This allows us to conclude that $\left( ({W_a}_{|N})^{-1} \right)_{|N}=V_a.$ From this operator equality, we compute the iterates of $({W_a}_{|N})^{-1}_{|N},$ namely:
\begin{align*}
(V_a^nF)(t)=e^{\frac{t}{a^n}(1-a^n)}F(t/a^n).
\end{align*}
We now estimate $\|V_a^n\|.$ By using the change of variable $s=t/a^n$, we obtain
\begin{align*}
\|V_a^nF\|^2_{L^2}&=\int_{\mathbb{R}_+}|(V_a^nF)(t)|^2dt=\int_{\mathbb{R}_+}e^{\frac{2t}{a^n}(1-a^n)}|F(t/a^n)|^2dt\\
&=a^n\int_{\mathbb{R}_+}e^{2s(1-a^n)}|F(s)|^2ds=a^n\int_{0}^ae^{2s(1-a^n)}|F(s)|^2ds\\
&\leq a^ne^{2a(1-a^n)}\|F\|^2_{L^2}
\end{align*}
for all $F\in N,$ which implies $\|V_a^n\|\leq a^{\frac{n}{2}}e^{a(1-a^n)}.$ Therefore, $r(V_a) \leq \sqrt{a}<1.$ This completes the proof.
\end{proof}

\begin{corollary}\label{W_a} If $a\in (0,1)$ then $W_a$ has the positive shadowing property.
\end{corollary}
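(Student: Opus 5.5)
The corollary should follow at once from Theorem \ref{Teo Difícil}, combined with the fact recalled in Section \ref{section 2} that every generalized hyperbolic operator has the positive shadowing property. That fact is a rewriting of \cite[Proposition 19]{Bernardes_Messaoudi_shadowingprincipal}. The plan is simply to check that the decomposition $L^2=M\oplus N$ built in the proof of Theorem \ref{Teo Difícil} satisfies the hypotheses of that proposition exactly as they are phrased in our definition, and then invoke it.

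\textbf{Step 1 (the decomposition).} The subspaces $M$ and $N$ are closed; each is the kernel of multiplication by an indicator function. Together they give $L^2=M\oplus N$, since $F=F\chi_{[a,\infty)}+F\chi_{(0,a)}$.

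\textbf{Step 2 (the invariant part).} By the first Claim we have $W_a(M)\subset M$ and $r({W_a}_{|M})=0<1$.

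\textbf{Step 3 (the expanding part).} By the second Claim, ${W_a}_{|N}$ is injective and $W_a(N)$ is closed. It also satisfies $N\subset W_a(N)$, and the restricted inverse $V_a$ has $r(V_a)\le\sqrt a<1$.

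These are precisely the two conditions in our definition of generalized hyperbolicity, so $W_a$ is generalized hyperbolic. Applying \cite[Proposition 19]{Bernardes_Messaoudi_shadowingprincipal} then gives the positive shadowing property.

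The only possible obstacle is a bookkeeping one: \cite{Bernardes_Messaoudi_shadowingprincipal} phrases its hypotheses slightly differently. There the conditions are stated via $T|_N$ being invertible onto $T(N)$ with $N\subset T(N)$ and a spectral radius bound on the inverse. I would briefly match that formulation with ours. If needed, I would also note the alternative direct route: shadowing on $M$ comes from summing the contracting backward-free series, and shadowing on $N$ comes from the series $-\sum_{k\ge1}V_a^k(\cdot)$. This second route needs no extra work beyond the norm bounds $\|W_a^n|_M\|$ and $\|V_a^n\|$ already obtained.
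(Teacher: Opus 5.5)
Your proposal is correct and is exactly the paper's argument: the corollary follows immediately from the preceding theorem ($W_a$ is generalized hyperbolic with respect to $L^2=M\oplus N$) combined with the fact, recalled in Section~\ref{section 2} from \cite[Proposition 19]{Bernardes_Messaoudi_shadowingprincipal}, that every generalized hyperbolic operator has the positive shadowing property. Your re-verification of the decomposition and the sketched direct series construction are optional extras that the paper does not use.
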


This allows us to resolve the remaining case.

\begin{corollary}\label{hyperbolic non-automorphism of type I}  Let $\phi$ be a hyperbolic non-automorphism of $\mathbb{D}$ of type I. Then $C_{\phi}$ has the positive shadowing property on $H^2(\mathbb{D}).$
\end{corollary}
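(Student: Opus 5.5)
The plan is to chain the reductions already set up in this subsection; no new estimates are needed.

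First, let $\phi$ be a hyperbolic non-automorphism of type I. By the canonical-form result of Bourdon and Shapiro recalled in Section \ref{section 2}, $C_{\phi}$ is similar to $C_{\varphi}$, where $\varphi(z)=az+1-a$ for some $a\in(0,1)$. By Proposition \ref{similar}, it then suffices to prove that $C_{\varphi}$ has the positive shadowing property on $H^2(\mathbb{D})$.

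Next, we use the commutative diagram preceding Theorem \ref{Teo Difícil}.
\begin{itemize}
\item The operator $\mathcal{U}$ conjugates $C_{\varphi}$ to $a^{-1}C_{\psi_a}$ on $H^2(\mathbb{C}_+)$. This uses $\gamma^{-1}\circ\psi_a\circ\gamma=\varphi$ and the fact that the weight $\frac{1-\varphi(z)}{1-z}$ equals the constant $a$.
\item The Paley--Wiener operator $P^{-1}$ conjugates $a^{-1}C_{\psi_a}$ to $W_a$ on $L^2$, by \cite[Lemma 9]{Carmo-Noor}.
\end{itemize}
Both $\mathcal{U}$ and $P$ are invertible, so composing them gives an invertible $U=P^{-1}\mathcal{U}:H^2(\mathbb{D})\to L^2$ with $C_{\varphi}=U^{-1}W_aU$. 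Thus $C_{\varphi}$ and $W_a$ are similar.

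Finally, Corollary \ref{W_a}, which follows from Theorem \ref{Teo Difícil} together with the fact that generalized hyperbolic operators have positive shadowing \cite[Proposition 19]{Bernardes_Messaoudi_shadowingprincipal}, says that $W_a$ has the positive shadowing property. Two more applications of Proposition \ref{similar}, first from $W_a$ to $C_{\varphi}$ and then from $C_{\varphi}$ to $C_{\phi}$, finish the proof.

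The only delicate point is confirming that the similarity in the diagram really goes in the right direction. Concretely, one must check that $\mathcal{U}$ intertwines $C_{\varphi}$ with $a^{-1}C_{\psi_a}$ and not with the scaled operator $aC_{\psi_a}$. This reduces to the scalar identity $W_{\Phi}=aC_{\varphi}$ for the weighted composition operator \eqref{operator}, which holds because the weight is identically $a$. Once that is settled, the argument is a formal composition of the similarities.
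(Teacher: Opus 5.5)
Your proposal is correct and follows the paper's route exactly. You reduce to the canonical form $\varphi(z)=az+1-a$, use the similarities $C_{\varphi}\sim a^{-1}C_{\psi_a}\sim W_a$ from the commutative diagram, and transfer the positive shadowing of $W_a$ (Corollary \ref{W_a}, via generalized hyperbolicity) back to $C_{\phi}$ with Proposition \ref{similar}. Your check that the weight $\frac{1-\varphi(z)}{1-z}$ is identically $a$, so that the scalar is $a^{-1}$ and not $a$, is exactly the observation the paper makes before the diagram.
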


\section{Comments on the shadowing property on \texorpdfstring{$H^{p}(\mathbb{D}),$ for $1\leq p\leq \infty$}{Hp(D)} }\label{section 4}

In this section, we briefly study the shadowing phenomenon for composition operators on the Hardy spaces $H^p(\mathbb{D}),$ where $1\leq p\leq\infty.$ First, let us recall that given $f\in \mathrm{Hol}(\mathbb{D}),$ $\|f\|_p$ is defined by
\begin{align*}
\|f\|_p:=\left(\sup_{0 < r < 1} \frac{1}{2\pi} \int_{0}^{2\pi} |f(re^{i \theta} )|^p d\theta\right)^{1/p}, \quad \text{for} \ 1\leq p<\infty,
\end{align*}
and $\|f\|_{\infty}:=\sup\{|f(z)|:z\in \mathbb{D}\}.$ The Hardy space $H^p(\mathbb{D})$ is the Banach space of all functions $f\in \mathrm{Hol}(\mathbb{D})$ for which the norm $\|f\|_p$ is finite.

\subsection{The case $p = \infty$}
The following result shows that $H^{\infty}(\mathbb{D})$ does not support composition operators with the positive shadowing property.

\begin{proposition} Let $\phi$ be a holomorphic self-map of $\mathbb{D}.$ Then $C_\phi$ does not have the positive shadowing property on $H^{\infty}(\mathbb{D}).$ 
\end{proposition}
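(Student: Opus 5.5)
We use the same natural pseudo-orbit built from the constant function $1$ (the analogue of $k_0$). Since $C_\phi 1 = 1$ for every holomorphic self-map $\phi$, this pseudo-orbit grows linearly, and the sup norm lets us read off its size at any single point. Fix $\delta>0$ and set $h\equiv 1$, so $\|h\|_\infty=1$. Consider the natural $\delta$-pseudo-orbit
\begin{align*}
f_0=0, \qquad f_n=\delta\sum_{j=0}^{n-1}C_\phi^j h = n\delta \quad (n\geq 1).
\end{align*}
Each $f_n$ is a constant function, and $\|C_\phi f_n - f_{n+1}\|_\infty=\|n\delta-(n+1)\delta\|_\infty=\delta$.

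Suppose some $g\in H^\infty(\mathbb{D})$ $\varepsilon$-shadowed this pseudo-orbit. For every $n$ we would have
\begin{align*}
\|C_\phi^n g - f_n\|_\infty=\sup_{z\in\mathbb{D}}\left|g(\phi^{[n]}(z))-n\delta\right|\geq n\delta-\|g\|_\infty,
\end{align*}
because $|g(\phi^{[n]}(z))|\leq \|g\|_\infty$ for every $z$. Note that no fixed point of $\phi$ is needed: in $H^\infty$ the point evaluations are uniformly bounded by the norm, unlike the weight $\|k_w\|_2$ in $H^2$. The right-hand side tends to $\infty$ as $n\to\infty$, which contradicts $\|C_\phi^n g-f_n\|_\infty\leq\varepsilon$. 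Hence for any $\varepsilon>0$ and any $\delta>0$ there is a $\delta$-pseudo-orbit that cannot be $\varepsilon$-shadowed, and $C_\phi$ does not have the positive shadowing property.

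There is essentially no obstacle. The only thing to check is that the elementary bound $|g\circ\phi^{[n]}|\leq\|g\|_\infty$ holds uniformly in $n$, and it does, since $\phi^{[n]}$ maps $\mathbb{D}$ into $\mathbb{D}$. This is also why the result holds for every holomorphic self-map $\phi$ and not only for linear fractional ones.
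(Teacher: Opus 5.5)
Your proof is correct and is essentially the paper's argument. The paper uses the same constant pseudo-orbit $f_n\equiv n\delta$ and the same bound $\|f_n-C_\phi^n g\|_\infty\geq n\delta-\|g\|_\infty$; the only difference is that it evaluates at the single point $0$ instead of taking the supremum over $z\in\mathbb{D}$.
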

\begin{proof} We construct a $\delta$-pseudo-orbit of $C_{\phi}$ which cannot be $\varepsilon$-shadowed by any element of $H^{\infty}(\mathbb{D})$ and any $\varepsilon>0$ in the following way: fix $\delta > 0$ and let $f_n$ be the constant function  $f_n (z) =  n \delta$ for $ z\in \mathbb{D}.$ Note that $(f_n)_{n\geq 0}$ is a $\delta$-pseudo-orbit of $C_{\phi}.$ By using the $H^{\infty}(\mathbb{D})$-norm, we obtain
\begin{align*}
\|f_n-C_{\phi}^nf\|_{\infty} \geq |f_n(0)-f(\phi^{[n]}(0))|\geq n\delta-|f(\phi^{[n]}(0))|\geq n\delta -\|f\|_{\infty},
\end{align*}
for every $f\in H^{\infty}(\mathbb{D}).$ This implies that $(f_n)_{n \geq 0}$ cannot be $\varepsilon$-shadowed for any $\varepsilon>0$.
\end{proof}

\subsection{The case $p \in [1,\infty)$}

When $1 \leq p < \infty$ and $p \neq 2$, the situation is a little more complicated. However, many ideas can be adapted with changes essentially related to the absence of the inner product structure. 

When $\phi$ is \textbf{LOX}, \textbf{EA}, or \textbf{HNA II}, Proposition \ref{loxodromic-elliptic} shows that $C_{\phi}$ does not have the positive shadowing property on $H^2(\mathbb{D})$. The same result holds on $H^p(\mathbb{D})$, as stated in the next theorem. In the proof for $H^2(\mathbb{D})$, we choose a function $f\in H^2(\mathbb{D})$ such that $f(\alpha)\neq 0$, where $\alpha$ is a fixed point of $\phi$, and construct a $\delta$-pseudo-orbit that cannot be $\varepsilon$-shadowed by any element of $H^2(\mathbb{D})$, for any $\varepsilon>0$. The same proof can be adapted to $H^p(\mathbb{D})$ by replacing the Cauchy--Schwarz inequality with the pointwise estimate
\begin{align}\label{pointwiseHp}
    |f(z)| \leq \frac{\|f\|_{p}}{(1 - |z|^2)^{\frac{1}{p}}}, \quad z\in \mathbb{D},
\end{align}
and starting with a function $f \in H^p(\mathbb{D})$ such that $f(\alpha) \neq 0$. This shows how to construct many $\delta$-pseudo-orbits which cannot be $\varepsilon$-shadowed for any $\varepsilon > 0$. In particular, the existence of one function $f \in H^p(\mathbb{D})$ satisfying $f(\alpha) \neq 0$ (for example some constant function) is enough. The following theorem is a consequence of this discussion and the details are left to the interested reader.

\begin{theorem}
Let $\phi$ be a holomorphic self-map of $\mathbb{D}$ and $1 \leq p < \infty.$ If $\phi$ has a fixed point in $\mathbb{D}$ then $C_{\phi}$ does not have the positive shadowing property on  $H^p(\mathbb{D})$.
\end{theorem}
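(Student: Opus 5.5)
We follow the proof of Proposition \ref{loxodromic-elliptic} verbatim, replacing the Cauchy--Schwarz step with the pointwise estimate \eqref{pointwiseHp}. Let $\alpha\in\mathbb{D}$ be the fixed point of $\phi$ and fix $\delta>0$. Take $f\equiv 1$, which lies in $H^p(\mathbb{D})$ with $\|f\|_p=1$ and $f(\alpha)=1\neq 0$; more generally, any $f\in H^p(\mathbb{D})$ with $f(\alpha)\neq0$ would do after normalizing $h=f/\|f\|_p$. Consider the natural $\delta$-pseudo-orbit $f_0=0$, $f_n=\delta\sum_{j=0}^{n-1}C_\phi^j h$. It is a $\delta$-pseudo-orbit because $C_\phi f_n-f_{n+1}=-\delta h$ has norm $\delta$.

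Since $\phi^{[j]}(\alpha)=\alpha$ for every $j$, we have $(C_\phi^j h)(\alpha)=h(\alpha)$. Hence
\begin{align*}
f_n(\alpha)=\frac{\delta f(\alpha)}{\|f\|_p}\,n .
\end{align*}
Now let $g\in H^p(\mathbb{D})$ be arbitrary. Applying \eqref{pointwiseHp} to $C_\phi^n g-f_n$ at $z=\alpha$, and using $(C_\phi^n g)(\alpha)=g(\alpha)$, gives
\begin{align*}
\|C_\phi^n g-f_n\|_p\geq (1-|\alpha|^2)^{1/p}\left[\frac{\delta|f(\alpha)|}{\|f\|_p}\,n-|g(\alpha)|\right].
\end{align*}
The right-hand side tends to $\infty$ as $n\to\infty$, because $|g(\alpha)|$ is a finite number depending only on $g$. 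So no $g$ can $\varepsilon$-shadow $(f_n)$, for any $\varepsilon>0$. Since $\delta>0$ was arbitrary, $C_\phi$ fails the positive shadowing property.

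The one point needing justification is the estimate \eqref{pointwiseHp} itself, namely $|f(z)|\le \|f\|_p(1-|z|^2)^{-1/p}$. The paper states it, so we may take it as given; if a proof is wanted, the standard route is as follows. First reduce to $z=0$ via the disk automorphism $\varphi_z$, using $|f(0)|\le\|f\|_p$, which follows from subharmonicity of $|f|^p$. Then control the norm of the weighted composition $f\mapsto (f\circ\varphi_z)\,(\varphi_z')^{1/p}$, which is an isometry of $H^p(\mathbb{D})$. Everything else is routine.
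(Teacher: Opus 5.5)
Your proof is correct and is exactly the adaptation the paper indicates: you rerun the argument of Proposition \ref{loxodromic-elliptic} with the natural $\delta$-pseudo-orbit built from a function with $f(\alpha)\neq 0$ (a constant, as the paper also suggests), and you replace Cauchy--Schwarz by the pointwise bound \eqref{pointwiseHp}. Your sketch of that bound, via the isometry $f\mapsto (f\circ\varphi_z)(\varphi_z')^{1/p}$ and $|F(0)|\le\|F\|_p$, is also sound, though the paper simply takes the estimate as known.
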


The parabolic case (\textbf{PA} or \textbf{PNA}) in $H^2(\mathbb{D})$ was solved using Lemma \ref{lemma} and Theorem \ref{parabolic}. The lemma is a purely algebraic statement and does not involve any property of $H^2(\mathbb{D})$. To adapt the theorem, we need to change the test functions a bit; we consider now  $f_{s}$ where $$f_{s}(z) = (1 - z)^{-s}$$
and $s$ is a real number. It is well known that $f_{s} \in H^p(\mathbb{D})$ if and only if $s < \frac{1}{p}$. There is, however, another problem when one tries to adapt the proof, which motivates the following division of cases:

\begin{theorem}\label{Teo para H^p's} Let $\phi$ be a parabolic self-map of $\mathbb{D}$.
\begin{enumerate}[label=(\arabic*), font=\normalfont]
    \item If $\phi$ is \textbf{PA} and $1 < p < \infty$ then $C_{\phi}$ does not have the positive shadowing property on $H^p(\mathbb{D})$.
    \item If $\phi$ is \textbf{PNA} and $1 \leq p < \infty$ then $C_{\phi}$ does not have the positive shadowing property on $H^p(\mathbb{D})$.
\end{enumerate}
\end{theorem}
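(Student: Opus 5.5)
The plan is to follow the proof of Theorem \ref{parabolic}, replacing the Cauchy--Schwarz inequality by the pointwise estimate \eqref{pointwiseHp}. First I reduce to the canonical form \eqref{parabolic symbol}, with $a\neq 0$ and $\mathrm{Re}(a)\geq 0$; here $\mathrm{Re}(a)=0$ is the \textbf{PA} case and $\mathrm{Re}(a)>0$ the \textbf{PNA} case. The similarity used for $H^2(\mathbb{D})$ is implemented by $C_\tau$ for a disk automorphism $\tau$, and $C_\tau$ is also bounded and invertible on $H^p(\mathbb{D})$. So the proof of Proposition \ref{similar}, which works verbatim in Banach spaces, lets me assume $\phi$ is as in \eqref{parabolic symbol}, with iterates given by \eqref{equality1}.

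Next I fix $s\in(0,1)$ with $s<1/p$, so that $f=f_s=(1-z)^{-s}\in H^p(\mathbb{D})$. For $\delta>0$ I take the natural $\delta$-pseudo-orbit built from $h=f/\|f\|_p$. Exactly as in the $H^2$ computation,
\[
f_n(0)=\frac{\delta}{2^s\|f\|_p}\sum_{j=0}^{n-1}(2+ja)^s .
\]
Lemma \ref{lemma}, which is purely algebraic, then gives $|f_n(0)|\geq c'\,(n-1)^{s+1}$. For any candidate shadowing vector $g\in H^p(\mathbb{D})$, evaluating at $0$ and using \eqref{pointwiseHp} at $z=0$ and at $z=\phi^{[n]}(0)$ gives
\[
\|C_\phi^n g-f_n\|_p\;\geq\; |f_n(0)|-|g(\phi^{[n]}(0))|\;\geq\; c'(n-1)^{s+1}-\|g\|_p\,\bigl(1-|\phi^{[n]}(0)|^2\bigr)^{-1/p}.
\]
This is simpler than the $H^2$ argument, since the constant in \eqref{pointwiseHp} at the origin is $1$.

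The key step, and the source of the case split, is the growth of $(1-|\phi^{[n]}(0)|^2)^{-1/p}$. The identity from the proof of Theorem \ref{parabolic} gives
\[
1-|\phi^{[n]}(0)|^2=\frac{4+4n\,\mathrm{Re}(a)}{4+4n\,\mathrm{Re}(a)+n^2|a|^2}.
\]
In the \textbf{PNA} case this is bounded below by a constant times $1/n$, so the subtracted term is $O(n^{1/p})$. Since $s+1>1\geq 1/p$, the lower bound tends to $\infty$ for every $1\le p<\infty$ and every admissible $s$. In the \textbf{PA} case ($\mathrm{Re}(a)=0$) the quantity is only comparable to $1/n^2$, so the subtracted term is $O(n^{2/p})$, and I need $s+1>2/p$ together with $s<1/p$. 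Choosing $s$ close to $1/p$, this is possible exactly when $1/p+1>2/p$, i.e.\ $p>1$. This is the main obstacle, and it explains why $p=1$ is excluded in part (1).

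With these choices $\|C_\phi^n g-f_n\|_p\to\infty$ for every $g\in H^p(\mathbb{D})$. Hence, for every $\delta>0$, the pseudo-orbit $(f_n)_{n\geq0}$ cannot be $\varepsilon$-shadowed for any $\varepsilon>0$, which proves both (1) and (2).
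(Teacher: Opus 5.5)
Your proposal is correct and follows essentially the same route as the paper: canonical form, test function $(1-z)^{-s}$ with $s<1/p$, Lemma~\ref{lemma} for the growth of $|f_n(0)|$, the pointwise estimate at $0$ and at $\phi^{[n]}(0)$, and the contrast between $n^{-1}$ decay (\textbf{PNA}) and $n^{-2}$ decay (\textbf{PA}) of $1-|\phi^{[n]}(0)|^2$, which is what forces $p>1$ in part (1). The differences are cosmetic: you show $\|C_\phi^n g-f_n\|_p\to\infty$ directly instead of arguing by contradiction, and you explicitly justify that the similarity to the canonical form (via a composition operator with automorphic symbol) remains valid on $H^p(\mathbb{D})$, a point the paper leaves implicit.
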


\begin{proof} Suppose that $\phi$ has the form 
\begin{align*}
\phi(z)=\frac{(2-a)z+a}{-az+(2+a)}, \quad z\in \mathbb{D},
\end{align*}
where $a \neq 0$ and $\mathrm{Re}(a)\geq 0.$ We will construct a $\delta$-pseudo-orbit of $C_{\phi}$ which cannot be $\varepsilon$-shadowed by any element of $H^p(\mathbb{D})$ and any $\varepsilon>0$. For each $s\in (0, \frac{1}{p})$, given $\delta>0,$ we consider the natural $\delta$-pseudo-orbit of $C_{\phi}$ constructed from $h:=(1/\|f_s\|_p)f_s;$ that is, $f_0 = 0$, $f_1= \delta h$ and 
\begin{align*}
f_n=\delta\sum_{j=0}^{n-1}C_{\phi}^{j}h, \quad \text{for} \ n\geq 2.
\end{align*}
As in the proof of Theorem \ref{parabolic} we have
\begin{align*}
|f_n(0)| &= \left| \delta\sum_{j=1}^{n-1}h(\phi^{[j]}(0))+\delta h(0) \right| = \frac{\delta}{2^s\|f_s\|_p} \left| \sum_{j=0}^{n-1}(2+ja)^s \right| \geq C (n - 1)^{s + 1}.
\end{align*}
Moreover
\begin{align*}
1-|\phi^{[n]}(0)|^2&=1 - \frac{n^2 |a|^2}{(2 + n \mathrm{Re}(a))^2 + (n \mathrm{Im}(a))^2} = \frac{4 + 4 n \mathrm{Re}(a)}{ 4 + 4 n \mathrm{Re}(a) + n^2 |a|^2}.\\
\end{align*}

Now, suppose for the sake of contradiction that $(f_n)_{n\geq 0}$ is $\varepsilon$-shadowed by some $g\in H^p(\mathbb{D})$, i.e., $\|C_{\phi}^ng-f_n\|_p\leq\varepsilon$ for all $n\in\mathbb{N}_0$. From \eqref{pointwiseHp} we obtain
\[
    |f_n(0)|(1-|\phi^{[n]}(0)|^2)^{\frac{1}{p}} \leq \varepsilon(1-|\phi^{[n]}(0)|^2)^{\frac{1}{p}}+\|g\|_p \leq \varepsilon+\|g\|_p.
\]
Hence, the sequence $$\left(|f_n(0)|(1-|\phi^{[n]}(0)|^2)^{\frac{1}{p}}\right)_{n\geq0}$$ is bounded. Now, let us analyze each case separately. Since the computation above works for every fixed $s\in(0,\frac{1}{p})$, we will choose a specific $s\in(0,\frac{1}{p})$ to obtain a contradiction.

If $\phi$ is \textbf{PA}, then $\mathrm{Re}(a) = 0$ and $ 1-|\phi^{[n]}(0)|^2 \geq c_0 n^{-2}$ for all sufficiently large $n$. Therefore
$$|f_n(0)|(1 - |\phi^{[n]}(0)|^2)^{\frac{1}{p}} \geq c_1 (n - 1)^{s + 1} n^{- \frac{2}{p}} \geq c_2 n^{s + 1 - \frac{2}{p}},$$
for some constants $c_i,$ where $i=0,1,2.$ If $p > 1$ we can choose $s \in (\max\{0, \frac{2}{p} - 1 \}, \frac{1}{p})$. This choice ensures that $s \in (0, \frac{1}{p})$ and gives a contradiction because $s + 1 - \frac{2}{p} > 0$ and thus $n^{s + 1 - \frac{2}{p}} \to \infty$.

If $\phi$ is \textbf{PNA} then $\mathrm{Re}(a) > 0$ and $ 1-|\phi^{[n]}(0)|^2 \geq c_0 n^{-1}$ (for $n$ big enough) which implies that 
$$|f_n(0)|(1 - |\phi^{[n]}(0)|^2)^{\frac{1}{p}} \geq c_1 (n - 1)^{s + 1} n^{- \frac{1}{p}} \geq c_2 n^{s + 1 - \frac{1}{p}}$$
for some constants $c_i,$ where $i=0,1,2.$ In this case, we can obtain a contradiction for $1 \leq p < \infty$ by choosing, for example, $s = \frac{1}{2p} \in (0, \frac{1}{p}).$
\end{proof}

The preceding result leaves the following question open.

\begin{ques}\normalfont
If $\phi$ is a parabolic automorphism of $\mathbb{D}$, does $C_{\phi}$ have the (positive) shadowing property on $H^1(\mathbb{D})?$
\end{ques}

For $\phi$ of type \textbf{HA} or \textbf{HNA I}, our method does not extend directly. For \textbf{HA} we used a chain of results which are valid only for Hilbert spaces. For \textbf{HNA I} we used the map $\mathcal{U}$ to move our problem to the Hardy space of the open right half-plane $H^2(\mathbb{C}_+)$ and also the Paley-Wiener theorem to move it to $L^2$. Therefore, if $p \neq 2$ and $\phi$ is \textbf{HA} or \textbf{HNA I}, the following remains open.

\begin{ques}\normalfont
If $\phi$ is a hyperbolic automorphism or a hyperbolic non-automorphism of type I, and if $1\leq p<\infty$ with $p\neq 2$, does $C_{\phi}$ have the positive shadowing property on $H^p(\mathbb{D})$?
\end{ques}

\vspace{0.3cm}

 \textit{Acknowledgments:} The authors would like to thank the anonymous referee for their careful reading and valuable comments that helped to improve the final version of this paper.


\printbibliography
\end{document}